\documentclass[fleqn,11pt]{article}
\usepackage{amsmath,amssymb,amsfonts,amsthm, array, booktabs}
\usepackage{color}

\newtheorem{Theorem}{Theorem}

\newtheorem{Example}[Theorem]{Example}
\newtheorem{Proposition}[Theorem]{Proposition}
\newtheorem{Definition}[Theorem]{Definition}
\newtheorem{Corollary}[Theorem]{Corollary}
\newtheorem{Lemma}[Theorem]{Lemma}
\newtheorem{Remark}[Theorem]{Remark}

\begin{document}

\title{A Quaternionic Hansen--Pedersen Inequality and its Application
to $S_r$-Transforms}
\author{ M. Fashandi\footnote{{\small{E-mail:}  fashandi@um.ac.ir (M. Fashandi)}} \\
\footnotesize{  Faculty of Mathematical Sciences, Ferdowsi University of Mashhad,}\\
  \footnotesize{P.O. Box 1159,  Mashhad, 91775\ Iran }
  }

\date{}
\maketitle

\begin{abstract}We establish a quaternionic Hansen--Pedersen inequality for real
operator-convex functions on right quaternionic Hilbert spaces. The
proof is based on the continuous real functional calculus for
selfadjoint quaternionic operators, a direct-sum functional-calculus
identity, and a block-unitary construction. As an application, let
\(T = U|T|\) be an injective \(p\)-hyponormal operator. We prove that,
for \(0 < r \leq 1/2\), the transform \(S_{r}(T) = U|T|^{r}U\) is
\(2p\)-hyponormal when \(0 < p \leq 1/2\) and hyponormal when
\(1/2 < p \leq 1\). We also show that \(S_{r}(T)\) preserves
log-hyponormality for \(r > 0\). These results illustrate the use of
quaternionic Jensen-type operator inequalities in the study of
transforms associated with the polar decomposition.
\end{abstract}

\noindent {\bf Keywords:} Hansen--Pedersen inequality, Operator convex function, Polar decomposition, $p$-hyponormal operator, Quaternionic Hilbert space, S-functional calculus.

{\it 2020 Mathematics Subject Classification:} 47A63, 47A05, 47B20, 47S05, 46S05.\\

\maketitle

%%%%%%%%%%%%%%%%%%%%%%%%%%%%%%%%
\section{Introduction}

The study of operator inequalities on Hilbert spaces has a long and rich history, with fundamental contributions from L\"owner, Heinz, Furuta, Hansen, and many others. These inequalities have proven to be indispensable tools in various areas of operator theory, including the study of hyponormal, $p$-hyponormal, and paranormal operators, as well as the analysis of transforms related to the polar decomposition.

In the complex setting, Hansen--Pedersen inequality in \cite{Hansen2003} is a cornerstone result in operator convexity theory. It states that if $X$ is a self-adjoint operator, $f$ is a real operator convex function, and $V$ is an isometry, then
\[f\left( V^{*}XV \right) \leq V^{*}f(X)V.\]
This inequality, originally due to Hansen \cite{Hansen1990}, has found numerous applications, including proofs of the Furuta inequality, the Choi-Davis-Jensen inequality, and various norm inequalities for $p$-hyponormal operators.
In recent years, there has been growing interest in extending classical operator theory to the quaternionic setting, motivated by applications in mathematical physics, quantum mechanics, and Clifford analysis. The development of the S-functional calculus in \cite{Col-book} has provided a robust framework for defining functions of quaternionic operators. Building upon this foundation, the quaternionic versions of the L\"owner-Heinz and H\"older-McCarthy inequalities were established in \cite{Adv.}, and more recently, the quaternionic Furuta inequality was proved in \cite{JANA}, opening the way for the study of $p$-hyponormal operators and their Aluthge transforms in the quaternionic setting. Very recently, $q$-numerical radius for quaternionic operators have been investigated in \cite{Moulaharabbi2025}, while the equivalence between the S-spectrum and the right spectrum of quaternionic operators was  established in  \cite{Carvalho2025}. These developments further motivate the study of operator inequalities and transforms in the quaternionic setting.
To the best of our knowledge, a quaternionic analogue of the Hansen--Pedersen operator inequality for bounded right-linear operators on quaternionic Hilbert spaces has not previously been established. The  main contribution of this paper is to fill this gap. In Section 3, we establish the quaternionic Hansen--Pedersen inequality (Theorem \ref{Hansen}). The quaternionic extension is nontrivial, the proof must be formulated using the quaternionic
  S-functional calculus, and the block argument must
  be checked in the category of bounded right-linear operators.

 The proof follows the elegant operator matrix method of Pe\v{c}ari\'{c} et al. \cite{Mond}, adapted to the quaternionic setting using the S-functional calculus and the quaternionic functional calculus on direct sums (Proposition \ref{directsum}). This result is new and provides a powerful tool for deriving operator inequalities in quaternionic Hilbert spaces.

The rest of the paper is organized as follows. Section 2  contains the necessary preliminaries on quaternionic Hilbert spaces, the S-functional calculus, polar decomposition, and known quaternionic operator inequalities. In Section 3, we establish  the functional calculus on direct sums and the quaternionic Hansen--Pedersen inequality. In Section 4, we apply Hansen--Pedersen inequality to study the transform $S_r(T)$ and prove the quaternionic version of Theorem 3.1 of \cite{Srt}. Finally, we prove that \(S_{r}(T)\) preserves
log-hyponormality for \(r > 0\).
%%%%%%%%%%%%%%%%%%%%%%%%%%%%%%%%%%%%%%%%%%%%%%%%%%
\section{Preliminaries and Auxiliary Results}
In this paper, we will denote the skew field of quaternions by $\mathbb{H}$. The elements of $\mathbb{H}$ can be expressed in the form \( q = x_0 + x_1i + x_2j + x_3k \), where \( x_0, x_1, x_2, \) and \( x_3 \) are real numbers. In this context, \( i, j, \) and \( k \) are referred to as imaginary units, which follow specific multiplication rules as follows:
\begin{equation*}
i^2=j^2=k^2=-1,\;\; ij=-ji=k,\;\; jk=-kj=i,\;\; {\rm\mbox and}\;\; ki=-ik=j.
\end{equation*}
Also, $\mathsf{H}$ denotes a right quaternionic Hilbert space, which is a linear vector space over $\mathbb{H}$ under right scalar multiplication and an inner product
$\langle.,.\rangle:\mathsf{H}\times \mathsf{H}\longrightarrow \mathbb{H}$
with the following properties:
\begin{enumerate}
\item[(i)] $\overline{\langle u, v\rangle}=\langle v, u\rangle$,
\item[(ii)] $\langle u, u\rangle>0$ unless $u=0$,
\item[(iii)] $\langle u,vp+wq\rangle=\langle u, v\rangle p+\langle u, w\rangle q$,
\end{enumerate}
for every $u, v, w\in \mathsf{H}$ and $p, q\in \mathbb{H}$. The quaternionic norm  $\Vert u \Vert= \sqrt{\langle u, u\rangle}$
is assumed to produce a complete metric space (see  Proposition 2.2 in  \cite{Ghi1}).
\noindent The notation $\mathfrak{B}(\mathsf{H})$ denotes the set of all bounded right linear quaternionic operators $T$ on $\mathsf{H}$ that is
 $$T(up+v)=(Tu)p+Tv,$$
for all $u, v \in \mathsf{H}$ and $p\in \mathbb{H}$. By  Proposition 2.11 in \cite{Ghi1}, $\mathfrak{B}(\mathsf{H})$  is a complete normed space with the norm defined by
\begin{equation*}\label{norm1}
\Vert T\Vert= \sup \left\{\dfrac{\Vert Tu\Vert}{\Vert u\Vert}, 0\neq u\in\mathsf{H}\right\}.
\end{equation*}
\noindent From now on, $\mathsf{H}$ will stand for a right quaternionic Hilbert space, and by an operator $T\in \mathfrak{B}(\mathsf{H})$, we mean a bounded right linear quaternionic operator.
\noindent For every  $T\in \mathfrak{B}(\mathsf{H})$, there exists  a unique operator $T^*\in \mathfrak{B}(\mathsf{H})$, known as the adjoint of $T$,  such that, for all $u, v \in \mathsf{H}$, the identity $\langle Tu, v\rangle=\langle u, T^*v\rangle$ holds. Several properties of the adjoint operator are established in Theorem 2.15 and Remark 2.16 of \cite{Ghi1}, including $\Vert T\Vert=\Vert T^*\Vert$. It is important to note  that, the adjoint operation is not an involution on $\mathfrak{B}(\mathsf{H})$, specifically, the relation $(qT)^*=\overline{\textbf{q}}T^*$  is valid  only when  $q\in \mathbb{R}$ (see \cite{Vis}). Table  \ref{tab:operator-classes}, summarizes the definitions of various classes of quaternionic operators that have been studied in the literature. Although the present paper focuses primarily on $p$-hyponormal and related operators, the table is included as a concise literature review to situate our work within the broader landscape of quaternionic operator theory. Some of these classes are not directly employed in the sequel; however, their inclusion provides context and may serve as a foundation for future investigations.

\begin{table}[htbp]
\centering
\caption{Summary of Operator Classes}
\label{tab:operator-classes}
\begin{tabular}{>{\raggedright}p{3.8cm} >{\raggedright\arraybackslash}p{7cm} >{\raggedright\arraybackslash}p{2.2cm} >{\raggedright\arraybackslash}p{2.8cm}}
\toprule
\textbf{Class} & \textbf{Definition} & \textbf{Notation} & \textbf{Origin (Complex Setting)} \\
\midrule
Selfadjoint & \( T=T^*\) &--& Classical \\
Positive & \( \langle Tx, x\rangle \geq 0, ~ \forall x\in \mathsf{H} \) &-- & Classical \\
Unitary & \(T^*T = TT^*= I\) & -- & Classical \\
Normal & \(T^*T = TT^*\) & -- & Classical \\
Hyponormal & \(T^*T \geq TT^*\) & \(\mathcal{H}\) & \cite{Halmos1950} \\
Semi-Hyponormal & \((T^*T)^{\frac{1}{2}}\geq (TT^*)^{\frac{1}{2}}\) & \(\mathcal{SH}\) & \cite{Xia1983}\\
$p$-Hyponormal & \((T^*T)^p\geq (TT^*)^p\), \(0 < p \le 1\) & \(\mathcal{H}_p\) & \cite{Alu}\\
Log-Hyponormal & \(\log |T| \ge \log |T^*|\) ( \(T\) invertible) & \(\mathcal{LH}\) & \cite{Ando1987}\\
Class A & \(|T^2| \ge |T|^2\) & \(\mathcal{A}\) & \cite{FurutaItoYamazaki1998} \\
Class A$(p,p)$ & \((T^*|T|^{2p}T)^{1/(p+1)} \ge |T|^2,\; p>0\) & \(\mathcal{A}(p,p)\) & \cite{FurutaItoYamazaki1998}\\
Class A$(s,t)$ &
\(|T^*|^{2t}\le
\left(|T^*|^t|T|^{2s}|T^*|^t\right)^{\frac{t}{s+t}},\;
0<s,t\le1\)
& \(\mathcal{A}(s,t)\) & \cite{Tanahashi2004}\\
Paranormal &
\(\|Tx\|^2\le \|T^2x\|\,\|x\|\)
& \(\mathcal{P}\) & \cite{Istratescu1960} \\
$p$-Paranormal &
\(\||T|^pU|T|^px\|\,\|x\|
\ge
\||T|^px\|^2,\; p>0\)
& \(\mathcal{P}_p\) & \cite{Fujii2000}\\
$(p,r)$-Paranormal &
\(\||T|^{p}U|T|^{r}x\|^2
\ge
\||T|^{p}x\|\,\||T|^{r}x\|,
\)
\(p,r>0\)
& \(\mathcal{P}_{p,r}\) & \cite{YanagidaYamazaki2000} \\
Absolute $(p,r)$-Paranormal &
\(
\||T|^p|T^*|^r x\|^r \|x\|^p
\ge
\||T^*|^r x\|^{p+r},
\)
\(p,r>0\)
& \(\mathcal{AP}(p,r)\) & \cite{YanagidaYamazaki2000} \\
Absolute $k$-Paranormal &
\(\||T|^kTx\|\,\|x\|^k
\ge
\|Tx\|^{k+1},\; k>0\)
& \(\mathcal{AP}_k\) &\cite{Fujii2000}\\
$w$-Hyponormal &
\(|\widetilde{T}|^2\ge |T|^2\ge |(\widetilde{T})^*|^2\)
& \(\mathcal{W}\) & \cite{AluthgeWang2000}\\
$\omega$-Hyponormal &
\(|\widetilde{T}|\ge |T|\ge |(\widetilde{T})^*|\)
& \(\mathcal{W}_{\omega}\) & \cite{AluthgeWang2000}\\
Dominant &
\(|T|\ge U|T^*|U^*\)
& \(\mathcal{D}\) & \cite{Stampfli1970}\\
Class \(F(p,r,q)\) &
\((|T^*|^r|T|^{2p}|T^*|^r)^{1/q}
\ge
|T^*|^{2r},\;
p,r,q>0\)
& \(\mathcal{F}(p,r,q)\) & \cite{Fujii2000}\\
\bottomrule
\end{tabular}
\end{table}
\noindent Let \(T \in B\left( \mathsf{H} \right)\). Its absolute value is
\(|T| = \left( T^{*}T \right)^{1/2}\). The polar decomposition is
\(T = U|T|\), where \(U\) is the unique partial isometry whose initial
space is \(\ker\left( |T| \right)^{\bot}\) and whose final space is
\(\overline{RanT}\).
  According to Theorem 2.20 in \cite{Ghi1},  polar decomposition of $T$ is unique, $\ker (\vert T\vert) =\ker (T)$ and $\Vert Uu\Vert= \Vert u\Vert$ for every $u\in \ker (\vert T\vert)^{\perp}$.

\noindent  For  two operators $S$ and $T \in \mathfrak{B}(\mathsf{H})$, we write $S \leq T$ if $T-S$ is a positive  operator, i.e. $\langle Sx, x\rangle \leq \langle Tx, x\rangle$ for every $x\in \mathsf{H}$. In particular,  for two real numbers $m<M$, by $mI\leq T\leq MI$,  we mean
$
m\langle x, x\rangle\leq \langle Tx, x\rangle\leq M\langle x, x\rangle
$
for all $x\in \mathsf{H}$.
\begin{Definition}\label{spectrum}\cite{Ghi1}
Let $\mathsf{H}$ be a right quaternionic Hilbert space and $T\in \mathfrak{B}(\mathsf{H})$ be a right linear quaternionic operator. For $q\in \mathbb{H}$, the associated operator $\Delta_{q}(T)$ is defined by:
$$
\Delta_q(T):=T^2- T(q+\bar{q})+I \vert q\vert^2.
$$
The spherical resolvent set of $T\in \mathfrak{B}(\mathsf{H})$ is the set
$$
\rho_{S}(T):=\{q\in\mathbb{H}:\,\,\, \Delta_q(T)^{-1}\in \mathfrak{B}(\mathsf{H})\}.
$$
\end{Definition}
\noindent The \textit{spherical spectrum} $\sigma_{S}(T)$ of $T$, or briefly the S-spectrum, is defined as the complement of $\rho_{S}(T)$ in $\mathbb{H}$. A partition for $\sigma_{S}(T)$ was introduced in \cite{Ghi1}, as follows:
\begin{itemize}
\item[(i)] The \textit{spherical point spectrum} of $T$:
$$
\sigma_{pS}(T)=\{q\in\mathbb{H}; \ker(\Delta_{q}(T))\neq\{0\}\}.
$$
\item[(ii)] The\textit{ spherical residual spectrum} of $T$:
$$\sigma_{rS}(T)=\{q\in\mathbb{H}; \ker(\Delta_{q}(T))=\{0\}, \overline{Ran (\Delta_{q}(T))}\neq\mathsf{H}\}.$$
\item[(iii)] The spherical continuous spectrum of $T$:
$$\sigma_{cS}(T)=\{q\in\mathbb{H}; \ker(\Delta_{q}(T))=\{0\}, \overline{Ran (\Delta_{q}(T))}=\mathsf{H},
\Delta_{q}(T)^{-1}\notin \mathfrak{B}(\mathsf{H})\}.$$
\end{itemize}
The \textit{spherical spectral radius} of $T$, denoted by $r_{S}(T)$,  is defined by:
\begin{equation*}\label{radius}
r_{S}(T)=\sup\{\vert q\vert \in \mathbb{R}^{+}; q\in \sigma_{S}(T)\}.
\end{equation*}
An \textit{eigenvector} of $T$ with \textit{right} \textit{eigenvalue} $q$ is an element $u\in \mathsf{H}-\{0\}$, for which $Tu=uq$.
According to Proposition 4.5 in \cite{Ghi1}, $q$ is a right eigenvalue of $T$ if and only if it belongs to the spherical point spectrum $\sigma_{pS}(T)$.

\noindent Next, we recall Theorem 5.5 from \cite{Ghi1} to establish the continuous real functional calculus for selfadjoint quaternionic operators.
\begin{Theorem}\label{remark}\cite{Ghi1}
Let $\mathsf{H}$ be a right quaternionic Hilbert space and  $T\in\mathfrak{B}(\mathsf{H})$ be selfadjoint. Then, there exists a unique continuous homomorphism
\begin{eqnarray*}
\Phi_T: \mathcal{C}(\sigma_S(T), \mathbb{R})\ni f\mapsto f(T)\in \mathfrak{B}(\mathsf{H}),
\end{eqnarray*}
of real Banach unital algebras such that:
\begin{itemize}
\item[(i)] the operator $f(T)$ is selfadjoint for every $f\in \mathcal{C}(\sigma_S(T), \mathbb{R})$;
\item[(ii)] $\Phi_T$ is positive; that is, $f(T)\geq 0$ if $f \in  \mathcal{C}(\sigma_S(T), \mathbb{R})$ and $f(t)\geq 0$ for every $t\in \sigma_S(T)$.
\end{itemize}
By $\mathcal{C}(\sigma_S(T), \mathbb{R})$ we mean the commutative real Banach unital algebra of continuous real-valued functions defined on $\sigma_S(T)$.
\end{Theorem}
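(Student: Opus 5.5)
The plan is to follow the classical route for the continuous functional calculus of a selfadjoint operator and check at each step that it goes through for bounded right-linear operators on $\mathsf{H}$. The key point is that the functions involved are \emph{real}-valued, so no noncommutativity of $\mathbb{H}$ enters. First I will show that for selfadjoint $T$ the S-spectrum $\sigma_S(T)$ is a nonempty compact subset of $\mathbb{R}$, contained in $[-\|T\|,\|T\|]$. For nonreal $q=\alpha+\beta J$ with $\beta\neq0$, write
\[
\Delta_q(T)=(T-\alpha I)^2+\beta^2 I .
\]
This operator is selfadjoint and bounded below by $\beta^2$, hence invertible, so $q\notin\sigma_S(T)$. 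For real $t$ with $|t|>\|T\|$, $\Delta_t(T)=(T-tI)^2$ is invertible by a Neumann series. Compactness then follows from openness of the set of invertible operators in the Banach space $\mathfrak{B}(\mathsf{H})$. Nonemptiness follows from the spectral radius formula $r_S(T)=\|T\|$: for selfadjoint $T$ one has $\|T^{2^n}\|=\|T\|^{2^n}$ by the C$^*$-identity $\|T^*T\|=\|T\|^2$, which holds verbatim in the quaternionic setting.

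Next, for a real polynomial $p(t)=\sum a_k t^k$ I define $p(T)=\sum a_k T^k$. This is selfadjoint since the $a_k$ are real and $(a_kT^k)^*=a_kT^k$. The map $p\mapsto p(T)$ is clearly a unital homomorphism of real algebras. The central estimate is
\[
\|p(T)\|=\sup_{t\in\sigma_S(T)}|p(t)| .
\]
Since $p(T)$ is selfadjoint, $\|p(T)\|=r_S(p(T))$, so it suffices to prove a spectral mapping theorem $\sigma_S(p(T))=p(\sigma_S(T))$ for real polynomials. I would prove it by factoring $p(t)-s$ over $\mathbb{R}$ into linear factors and irreducible quadratic factors. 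The quadratic factors are of the form $\Delta_q(T)$ with $q$ nonreal, hence invertible by the first step. The linear factors $T-t_iI$ all commute, so $p(T)-sI$ is invertible if and only if each $T-t_iI$ is, and for real $t$ this holds exactly when $\Delta_t(T)=(T-tI)^2$ is invertible. This step, the spectral mapping for real polynomials together with the equality between the spectrum and the spectral radius, is where I expect the main work. It is easier than the complex case only because every relevant spectral point is real.

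With the isometry in hand, Stone--Weierstrass on the compact set $\sigma_S(T)\subset\mathbb{R}$ gives density of real polynomials in $\mathcal{C}(\sigma_S(T),\mathbb{R})$. So $p\mapsto p(T)$ extends uniquely to an isometric, hence continuous, homomorphism $\Phi_T$ of real Banach unital algebras. Selfadjointness (i) passes to limits because the adjoint map is isometric and real-linear.

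For positivity (ii), given $f\ge0$ on $\sigma_S(T)$, write $f=g^2$ with $g=\sqrt f$ real continuous. Then
\[
f(T)=g(T)^2=g(T)^*g(T),
\]
so $\langle f(T)x,x\rangle=\|g(T)x\|^2\ge0$. Uniqueness holds because any continuous unital homomorphism is determined on the identity function, where it must give $T$. Hence it is determined on polynomials, and by density on all of $\mathcal{C}(\sigma_S(T),\mathbb{R})$.
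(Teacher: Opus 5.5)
The paper gives no proof of this statement. It is quoted from Ghiloni--Moretti--Perotti \cite{Ghi1}, so there is no in-paper argument to compare against. Your proof is the classical Gelfand-type construction, and it is correct:
\begin{itemize}
\item The identity $\Delta_q(T)=(T-\alpha I)^2+\beta^2I\ge\beta^2I$ puts $\sigma_S(T)$ inside $\mathbb{R}$.
\item Factoring $p(t)-s$ over $\mathbb{R}$, where the irreducible quadratic factors become $\Delta_q(T)$ for nonreal $q$, gives $\sigma_S(p(T))=p(\sigma_S(T))$.
\item Combined with $\|A\|=r_S(A)$ for selfadjoint $A$, this yields $\|p(T)\|=\max_{\sigma_S(T)}|p|$.
\item The extension by density, selfadjointness, and positivity via $f=g^2$ are then routine.
\end{itemize}
Compared with the bare citation, your route makes explicit why the complex argument transfers. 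Only real coefficients and real spectral points ever occur, so the noncommutativity of $\mathbb{H}$ never enters.

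Two points need tightening.

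First, nonemptiness of $\sigma_S(T)$ and the equality $r_S(A)=\|A\|$ rest on the quaternionic spectral radius formula $r_S(A)=\lim_n\|A^n\|^{1/n}$, which you use implicitly. This is a genuine theorem of S-spectral theory (see \cite{Ghi1}), not a consequence of the C$^*$-identity alone, so you should cite it. Alternatively, you can bypass it for selfadjoint $A$:
\begin{itemize}
\item Show directly that $m_A=\inf_{\|x\|=1}\langle Ax,x\rangle$ and $M_A=\sup_{\|x\|=1}\langle Ax,x\rangle$ lie in $\sigma_S(A)$. For example, $A-m_AI\ge0$ is not bounded below, by Cauchy--Schwarz for positive operators.
\item Show that $\|A\|=\max\{|m_A|,|M_A|\}$.
\end{itemize}

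Second, uniqueness does not follow from the listed properties. For $t_0\in\sigma_S(T)$, the map $f\mapsto f(t_0)I$ is a continuous unital homomorphism satisfying (i) and (ii), and it differs from $\Phi_T$ unless $T=t_0I$. Your phrase ``where it must give $T$'' is really the normalization $\Phi_T(\mathrm{id})=T$, which the statement encodes only through the notation $f(T)$. State it as part of the hypothesis; then your density argument does prove uniqueness.
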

\noindent Theorem \ref{Bounds} from \cite{Adv.} establishes the optimal bounds for the S-spectrum of a selfadjoint quaternionic operator. Also, it proves that the S-specta of $ST$ and $TS$, along with zero, as well as their  S-spectral radii are the same.
\begin{Theorem}\label{Bounds}
\label{compact interval}\cite{Adv.}
 Let $\mathsf{H}$ be a right quaternionic Hilbert space. For $S, T\in\mathfrak{B}(\mathsf{H})$, we have
\begin{itemize}
\item[(i)]
If $T$ is a selfadjoint quaternionic operator and
\begin{equation*}
\label{infsup}
m_T=\inf \{\langle Tx,x\rangle\,:  \Vert x\Vert=1,\,x\in \mathsf{H}\}, \, \mbox{and}\,\, M_T=\sup \{\langle Tx,x\rangle\, : \Vert x\Vert=1,\,x\in \mathsf{H}\},
\end{equation*}
then $\sigma_S(T)\subset [m_T, M_T]$, $m_T=\min \sigma_S(T)$ and  $M_T=\max \sigma_S(T)$(see Theorem 3 in \cite{Adv.}).
\item[(ii)]
$\sigma_S(ST)\cup \{0 \}=\sigma_S(TS)\cup \{0 \}$ and $r_S(ST)=r_S(TS)$ (see Theorem 5  in \cite{Adv.}).
\end{itemize}
\end{Theorem}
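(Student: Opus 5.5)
For part (i), I would first show that the S-spectrum of a selfadjoint $T$ lies in $\mathbb{R}$, and then that it lies in $[m_T,M_T]$. Write $q=a+b\,\mathbf{v}$ with $a,b\in\mathbb{R}$. A direct expansion gives
\[
\Delta_q(T)=(T-aI)^2+b^2 I.
\]
If $b\neq 0$, the operator $(T-aI)^2$ is positive, being $(T-aI)^*(T-aI)$. Hence $\Delta_q(T)\geq b^2 I$. A selfadjoint operator bounded below by a positive constant is injective with closed range, and its range is dense because its kernel is trivial. So it is invertible, and $q\in\rho_S(T)$.

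If $b=0$ and $a<m_T$, then $T-aI\geq (m_T-a)I$, so $T-aI$ is invertible by the same argument. Then $\Delta_a(T)=(T-aI)^2$ is invertible too. The case $a>M_T$ is symmetric. This proves $\sigma_S(T)\subset[m_T,M_T]$.

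To see that $m_T\in\sigma_S(T)$, set $P=T-m_TI\geq 0$. I would use the Cauchy--Schwarz inequality for the positive form $(x,y)\mapsto\langle Px,y\rangle$. It holds in the quaternionic setting because $\langle Px,x\rangle$ is real, so the usual argument with real parts goes through. It gives
\[
\|Px\|^2\leq \|P\|\,\langle Px,x\rangle .
\]
Choose unit vectors $x_n$ with $\langle Tx_n,x_n\rangle\to m_T$. Then $\langle Px_n,x_n\rangle\to 0$, so $Px_n\to 0$. Thus $P$ is not bounded below, and neither is $\Delta_{m_T}(T)=P^2$, because $\|P^2x_n\|\leq\|P\|\|Px_n\|\to0$. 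Hence $m_T\in\sigma_S(T)$, and the same argument applies to $M_T$. I expect this step to be the main obstacle. The point to check is that the Cauchy--Schwarz inequality and the bounded-below/invertibility criterion really hold for right linear quaternionic operators; Ghiloni et al.\ \cite{Ghi1} supply the needed facts.

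For part (ii), I would reduce to the algebraic fact that $I+AB$ is invertible if and only if $I+BA$ is. The explicit inverse is $(I+BA)^{-1}=I-B(I+AB)^{-1}A$, and this identity uses only associativity and right linearity, so it is valid in $\mathfrak{B}(\mathsf{H})$. Let $q\neq 0$ and $a=\operatorname{Re}q$. Then
\[
\Delta_q(ST)=|q|^2I+S\,(TST-2aT),\qquad \Delta_q(TS)=|q|^2I+(TST-2aT)\,S.
\]
Dividing by the nonzero real number $|q|^2$ and taking $A=S$, $B=TST-2aT$, one operator is invertible exactly when the other is. Therefore $\sigma_S(ST)\setminus\{0\}=\sigma_S(TS)\setminus\{0\}$, which is the claimed equality after adjoining $\{0\}$. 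Finally, $r_S$ is a supremum of moduli, and the point $0$ does not affect it, so $r_S(ST)=r_S(TS)$.
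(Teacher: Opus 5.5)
The paper gives no proof of this statement. It quotes it from \cite{Adv.} (Theorems 3 and 5 there) and uses it as a black box, so there is no in-paper argument to compare with. Your proposal is a correct, self-contained proof of both parts.

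In (i), take $\mathbf{v}$ to be a unit imaginary quaternion, so $q+\bar q=2a$ and $|q|^2=a^2+b^2$. The identity $\Delta_q(T)=(T-aI)^2+b^2I$ then gives at once both $\sigma_S(T)\subset\mathbb{R}$ and $\sigma_S(T)\subset[m_T,M_T]$. Your estimate $\|Px\|^2\le\|P\|\langle Px,x\rangle$ is valid. For the form $[x,y]=\langle Px,y\rangle$, expand $[x-yq,\,x-yq]\ge 0$ with $q=t\,\overline{[x,y]}$ and $t\in\mathbb{R}$. The only points needing care are that $[\cdot,\cdot]$ is Hermitian because $P=P^*$, and that $[y,y]$ is real, hence central, so $\bar q[y,y]q=|q|^2[y,y]$.

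In (ii), your factorisations $\Delta_q(ST)=|q|^2I+S(TST-2aT)$ and $\Delta_q(TS)=|q|^2I+(TST-2aT)S$ are right. Jacobson's identity is purely ring-theoretic, so it holds verbatim in $\mathfrak{B}(\mathsf{H})$. For the radii, adjoining $0$ is harmless because $r_S$ is a supremum taken in $\mathbb{R}^+$; equivalently, the S-spectrum of a bounded operator on a nonzero space is nonempty.

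Compared with citing \cite{Adv.}, your route uses only the definition of $\Delta_q$, Cauchy--Schwarz, and $\overline{\mathrm{Ran}\,A}=(\ker A^*)^\perp$. In particular it does not depend on the functional calculus of Theorem \ref{remark}. This makes the ($\Leftarrow$) half of Lemma \ref{spec-order}(a) elementary. It also gives directly the reality of the S-spectrum of a selfadjoint operator, which the paper otherwise imports from \cite{Col-book} in the proof of Proposition \ref{directsum}.
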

%&&&&&&&&&&&&&&&&&&&&&&&&&&&&&&&&&&&&&&&&&&&&&&&&&&&&&&&&&&&&&&&&&&&&

\noindent The following lemma records two elementary facts about the order structure on $\mathfrak{B}(\mathsf{H})$ that we use repeatedly in the proof of Theorem \ref{Hansen} below, to make explicit that every operator to which $f$ is applied indeed has S-spectrum contained in $I$.

\begin{Lemma}\label{spec-order}
Let $S\in\mathfrak{B}(\mathsf{H})$ be selfadjoint and $m,M\in\mathbb{R}$ with $m\le M$. Then
\begin{itemize}
\item[(a)] $\sigma_S(S)\subset[m,M]$ if and only if $mI\le S\le MI$.
\item[(b)] For any right quaternionic Hilbert space $\mathsf{H}_1$ and any $C\in\mathfrak{B}(\mathsf{H}_1,\mathsf{H})$, if $S\ge 0$ then $C^*SC\ge 0$ on $\mathsf{H}_1$; if $S\le 0$ then $C^*SC\le 0$.
\end{itemize}
\end{Lemma}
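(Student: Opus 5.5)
Part (a) follows from Theorem \ref{Bounds}(i), and part (b) follows directly from the definition of the adjoint.

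For (a), let $m_S$ and $M_S$ be the infimum and supremum of the numerical range of $S$ over unit vectors, as in Theorem \ref{Bounds}(i). That theorem gives $\sigma_S(S)\subset[m_S,M_S]$, with $m_S=\min\sigma_S(S)$ and $M_S=\max\sigma_S(S)$. Both endpoints therefore belong to the S-spectrum.

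Suppose first that $\sigma_S(S)\subset[m,M]$. Then $m\le m_S$ and $M_S\le M$. Hence for every unit vector $x$ we get $m\le m_S\le\langle Sx,x\rangle\le M_S\le M$. Note that $\langle Sx,x\rangle$ is real because $S$ is selfadjoint, since $\overline{\langle Sx,x\rangle}=\langle x,Sx\rangle=\langle Sx,x\rangle$. For a general $x\neq 0$ we apply this to $x/\|x\|$. Right scalar multiplication by a positive real commutes with everything, so $\langle Sx,x\rangle=\|x\|^2\langle S(x/\|x\|),x/\|x\|\rangle$. This yields $m\langle x,x\rangle\le\langle Sx,x\rangle\le M\langle x,x\rangle$, which is exactly $mI\le S\le MI$.

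Conversely, suppose $mI\le S\le MI$. Restricting to unit vectors gives $m\le m_S$ and $M_S\le M$, so $\sigma_S(S)\subset[m_S,M_S]\subset[m,M]$. The only point needing care is the reduction between unit vectors and general vectors, and this works because the scalars involved are real.

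For (b), suppose $S\ge 0$ and let $x\in\mathsf{H}_1$. Then $\langle C^*SCx,x\rangle=\langle SCx,Cx\rangle\ge 0$, using the defining relation of the adjoint between the two spaces. This relation holds for $C\in\mathfrak{B}(\mathsf{H}_1,\mathsf{H})$ by the same Riesz-type argument as in \cite{Ghi1}. Also $C^*SC$ is selfadjoint, since $(C^*SC)^*=C^*S^*C^{**}=C^*SC$. The case $S\le 0$ follows by applying the first case to $-S$, because $(-S)$ scales by the real number $-1$ and so $C^*(-S)C=-C^*SC$.

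The main thing to check is that adjoints of operators between two different quaternionic Hilbert spaces exist and satisfy $C^{**}=C$. I would either cite the standard Riesz representation argument or embed both spaces into $\mathsf{H}_1\oplus\mathsf{H}$. Everything else is immediate.
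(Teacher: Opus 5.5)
Your proof is correct. Part (b) and the implication $mI\le S\le MI\Rightarrow\sigma_S(S)\subset[m,M]$ match the paper. The difference lies in the forward implication.

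The paper gets $S-mI\ge 0$ and $MI-S\ge 0$ from positivity of the continuous real functional calculus (Theorem \ref{remark}(ii)). It applies this to the functions $t\mapsto t-m$ and $t\mapsto M-t$, which are nonnegative on $\sigma_S(S)$. You instead use the attainment part of Theorem \ref{Bounds}(i), namely that $m_S$ and $M_S$ actually belong to $\sigma_S(S)$. This gives $m\le m_S$ and $M_S\le M$, and you then read the operator inequalities directly off the numerical range.

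Your route derives both directions from a single cited result and stays at the level of quadratic forms. You are also more careful than the paper about why $\langle Sx,x\rangle$ is real and about passing from unit vectors to arbitrary vectors. The paper's route needs only the weaker containment $\sigma_S(S)\subset[m_S,M_S]$ from Theorem \ref{Bounds}(i). The price is reliance on the functional calculus, including the implicit facts that it sends $t\mapsto t$ to $S$ and constants to scalar multiples of $I$.

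You also note that adjoints of operators $\mathsf{H}_1\to\mathsf{H}$ need justification. The paper passes over this point silently, and either of your proposed fixes is adequate.
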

\begin{proof}
(a) ($\Rightarrow$) If $\sigma_S(S)\subset[m,M]$ then $t\mapsto t-m$ and $t\mapsto M-t$ are nonnegative on $\sigma_S(S)$, so by positivity of the functional calculus (Theorem \ref{remark}(ii)), $S-mI\ge 0$ and $MI-S\ge 0$, i.e. $mI\le S\le MI$.\\
($\Leftarrow$) By Theorem \ref{Bounds} (i) is valid.\\
\noindent (b) For $x\in\mathsf{H}_1$, $\langle C^*SCx,x\rangle=\langle SCx,Cx\rangle$, which is $\ge0$ (resp. $\le0$) whenever $S\ge0$ (resp. $S\le0$), since this holds for every vector $Cx\in\mathsf{H}$.
\end{proof}

%%%%%%%%%%%%%%%%%%%%%%%%%%%%%%%%%%%%%%%%%%%%%%%%%%%%%%%&&&&&&&&&&&&&&&
%%%%%%%%%%%%%%%%%%%%%%%%%%%%%%%%%%%%%%%%%%%%%%%%%%%%%%
\noindent The following lemma is an extension of Lemma 1.6 in \cite{Mond} for the quaternionic setting.
\begin{Lemma} \label{U^*SU}
Let \(S\) be selfadjoint and let \(U\) be an
isometry, \(U^{*}U = I\). Suppose \(f\) is real-valued and
continuous on an interval containing \(\sigma_{S}(S) \cup \{ 0\}\). Then
\begin{equation}\label{U^*SU1}
f\left( USU^{*} \right) = Uf(S)U^{*} + f(0)\left( I - UU^{*} \right).\end{equation}
If \(U\) is unitary, then for every continuous real-valued \(f\) on the
relevant spectrum,
\begin{equation}\label{U^*SU2}f\left( U^{*}SU \right) = U^{*}f(S)U.\end{equation}
\end{Lemma}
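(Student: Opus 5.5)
We prove \eqref{U^*SU1} first for polynomials and then pass to continuous $f$ by approximation.

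\emph{Step 1: monomials.} Put $P=UU^*$. Since $U^*U=I$, $P$ is a selfadjoint projection, and $PU=U$, $U^*P=U^*$. For every $n\ge 1$, induction gives
\[
(USU^*)^n = US^nU^*,
\]
because $US^{n-1}U^*\,USU^* = US^{n-1}(U^*U)SU^* = US^nU^*$. For $n=0$ the left side is $I$, while $US^0U^*=P$. Hence for a real polynomial $p(t)=\sum_{k=0}^N a_kt^k$,
\[
p(USU^*) = \sum_{k\ge1}a_kUS^kU^* + a_0I = U p(S)U^* - a_0P + a_0 I = Up(S)U^* + p(0)(I-UU^*).
\]
This uses only that real scalars commute with right linear operators. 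It is consistent with Theorem~\ref{remark}, since the functional calculus is a unital real algebra homomorphism and so sends polynomials to the corresponding operator polynomials.

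\emph{Step 2: spectra.} Both $S$ and $USU^*$ are selfadjoint. By Theorem~\ref{Bounds}(ii) applied to the pair $US$ and $U^*$,
\[
\sigma_S(USU^*)\cup\{0\} = \sigma_S(SU^*U)\cup\{0\} = \sigma_S(S)\cup\{0\}.
\]
So $f$, which is continuous on a compact interval $J$ containing $\sigma_S(S)\cup\{0\}$, is defined on both spectra. Alternatively, one can bound the spectrum directly: with $m,M$ the endpoints of $J$ we have $mI\le S\le MI$, and Lemma~\ref{spec-order}(b) gives $mP\le USU^*\le MP$. Since $m\le 0\le M$ and $0\le P\le I$, it follows that $mI\le USU^*\le MI$, and Lemma~\ref{spec-order}(a) then yields $\sigma_S(USU^*)\subset J$.

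\emph{Step 3: approximation.} By Weierstrass, choose real polynomials $p_n\to f$ uniformly on $J$. The map $\Phi_T$ of Theorem~\ref{remark} is continuous, and moreover isometric in sup norm on the spectrum. Hence $p_n(S)\to f(S)$ and $p_n(USU^*)\to f(USU^*)$ in norm, and $p_n(0)\to f(0)$. Passing to the limit in the polynomial identity gives \eqref{U^*SU1}.

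\emph{Step 4: the unitary case.} If $U$ is unitary, then $U^*$ is also an isometry and $I-U^*U=0$. Applying \eqref{U^*SU1} with $U^*$ in place of $U$ therefore gives \eqref{U^*SU2}. No condition at $0$ is needed here. Indeed, $U^*p(S)U=p(U^*SU)$ holds for all polynomials, including constants, since $U^*U=I$. Moreover $\sigma_S(U^*SU)=\sigma_S(S)$, because $\Delta_q(U^*SU)=U^*\Delta_q(S)U$ is invertible exactly when $\Delta_q(S)$ is. So the approximation argument works on $\sigma_S(S)$ alone.

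The main obstacle is justifying the continuity step, namely that $\|g(A)\|\le\sup_{\sigma_S(A)}|g|$ for the quaternionic calculus. This is part of the continuous homomorphism in Theorem~\ref{remark}, as in \cite{Ghi1}. One must also make sure the polynomial calculus agrees with $\Phi$, which follows from uniqueness in Theorem~\ref{remark}.
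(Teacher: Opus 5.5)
Your proof is correct and follows the same route as the paper: the polynomial identity $p(USU^*)=Up(S)U^*+p(0)(I-UU^*)$, obtained from $(USU^*)^k=US^kU^*$, followed by uniform polynomial approximation and continuity of the real functional calculus. Your Step~2 (checking $\sigma_S(USU^*)\subset J$) and Step~4 (the unitary case, handled via $U^*$ as an isometry or directly on $\sigma_S(S)$) supply details that the paper's proof leaves implicit.
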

\begin{proof}
Let \(P = UU^{*}\). For every integer \(k \geq 1\), by \(U^{*}U = I\), we have $\left( USU^{*} \right)^{k} = US^{k}U^{*}$. If $p(t) = a_{0} + \sum_{k = 1}^{n}a_{k}t^{k}$, 
then
\[p\left( USU^{*} \right) = a_{0}I + U\left( \sum_{k = 1}^{n}a_{k}S^{k} \right)U^{*}.\]
Therefore,
\[p\left( USU^{*} \right) = Up(S)U^{*} + a_{0}\left( I - UU^{*} \right).\]
Uniform approximation by real polynomials on the compact set
\(\sigma_{S}(S) \cup \{ 0\}\) gives
\[f\left( USU^{*} \right) = Uf(S)U^{*} + f(0)\left( I - UU^{*} \right).\]
\end{proof}
\noindent The following corollary establishes the polar decomposition of $T^*$ based on the polar decomposition of $T$.
\begin{Corollary}\label{T^*} \cite{JANA}
Let $T \in  \mathfrak{B}(\mathsf{H})$ and $T=U\vert T\vert$ be its polar decomposition. Then
\begin{itemize}
\item[(i)] $\vert T^* \vert^t= U\vert T\vert ^tU^*$ for all positive numbers $t$.
\item[(ii)] $T^*= U^* \vert T^*\vert$ is the polar decomposition of $T^*$.
\end{itemize}
\end{Corollary}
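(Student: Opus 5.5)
The plan is to prove (i) by checking it first for polynomials and then passing to $t^{s}$ through the real functional calculus. Part (ii) will then follow from uniqueness of the polar decomposition.

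\textbf{Part (i).} Write $P=U^*U$ for the projection onto $\ker(|T|)^\perp=\overline{\operatorname{Ran}|T|}$, so that $U^*U|T|=|T|=|T|U^*U$. Since $TT^*=U|T|^2U^*$, we have
\[
(TT^*)^k=U|T|^{2k}U^* \qquad (k\ge 1),
\]
because the inner factors $U^*U$ are absorbed by the neighbouring $|T|$ factors. This gives $p(TT^*)=U\,p(|T|^2)\,U^*$ for every real polynomial with $p(0)=0$; this is the mechanism of Lemma \ref{U^*SU}, but with $U$ only a partial isometry that is isometric on $\overline{\operatorname{Ran}|T|}$. Now fix $t>0$ and put $f(x)=x^{t/2}$, which is continuous on $[0,\|T\|^2]$ and vanishes at $0$. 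By Theorem \ref{Bounds}(i), both $\sigma_S(TT^*)$ and $\sigma_S(|T|^2)$ lie in $[0,\|T\|^2]$. Choose real polynomials $p_n\to f$ uniformly on this interval, replacing $p_n$ by $p_n-p_n(0)$ so that $p_n(0)=0$. Continuity of the homomorphism in Theorem \ref{remark} gives $p_n(TT^*)\to f(TT^*)=|T^*|^t$ and $p_n(|T|^2)\to |T|^t$ in norm. Hence $|T^*|^t=U|T|^tU^*$. The main care needed is to identify $f(TT^*)$ as the power defined by the functional calculus, i.e.\ that $((TT^*)^{1/2})^t=(TT^*)^{t/2}$. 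This follows from the composition rule, which one checks by polynomial approximation together with uniqueness in Theorem \ref{remark}.

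\textbf{Part (ii).} We have $T^*=|T|U^*$. Applying (i) with $t=1$ and using $U^*U|T|=|T|$ gives
\[
U^*|T^*|=U^*U|T|U^*=|T|U^*=T^*.
\]
It remains to show that $U^*$ is a partial isometry whose initial space is $\ker(|T^*|)^\perp$. The adjoint of a partial isometry is a partial isometry, since $UU^*$ is a projection, and the initial space of $U^*$ is the final space of $U$, namely $\overline{\operatorname{Ran}T}$. Moreover
\[
\ker|T^*|=\ker T^*=(\operatorname{Ran}T)^\perp,
\]
using Theorem 2.20 of \cite{Ghi1}, so $\ker(|T^*|)^\perp=\overline{\operatorname{Ran}T}$, which is exactly the initial space of $U^*$. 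The final space of $U^*$ is the initial space of $U$, which is $\overline{\operatorname{Ran}|T|}=\overline{\operatorname{Ran}T^*}$. Uniqueness of the polar decomposition then shows that $T^*=U^*|T^*|$ is the polar decomposition of $T^*$.

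The step I expect to need the most care is the functional-calculus identification in (i): the uniform limit has to be taken simultaneously on both S-spectra, and the zero constant term has to be handled because $U$ is only a partial isometry. Restricting to polynomials with $p(0)=0$ resolves both issues.
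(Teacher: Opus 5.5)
Your proof is correct. The paper gives no proof of this corollary; it is quoted from \cite{JANA}. So there is no in-paper argument to match, and your write-up serves as a self-contained derivation. It uses only Theorem \ref{remark}, Theorem \ref{Bounds}(i), and the square-root and polar-decomposition facts from Ghiloni et al.

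You are right that Lemma \ref{U^*SU} cannot be invoked directly, because $U$ is only a partial isometry unless $T$ is injective. Your adaptation handles this correctly: you use only polynomials with $p(0)=0$, and you use $U^*U|T|=|T|$ where the lemma uses $U^*U=I$. This is consistent with the lemma itself, whose correction term $f(0)(I-UU^*)$ vanishes for $f(x)=x^{t/2}$.

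Your route does carry one extra cost. You approximate functions of $TT^*$ and $T^*T$ rather than of $|T^*|$ and $|T|$, so you need the composition rule $((TT^*)^{1/2})^t=(TT^*)^{t/2}$ and its analogue for $|T|^t$. You can avoid it by first settling $t=1$, the same way the paper identifies $|S_r(T)|$ in the log-hyponormality theorem:
\begin{itemize}
\item $U|T|U^*\ge 0$ by Lemma \ref{spec-order}(b), and $(U|T|U^*)^2=U|T|^2U^*=TT^*$.
\item Uniqueness of the positive square root then gives $|T^*|=U|T|U^*$.
\item Finally, run your $p(0)=0$ approximation for $x\mapsto x^t$ directly on $U|T|U^*$, using $U^*U|T|=|T|$, to get $|T^*|^t=U|T|^tU^*$.
\end{itemize}

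Part (ii) is fine as written. Once you have checked the defining properties (partial isometry, initial space $\ker(|T^*|)^\perp$, final space $\overline{\operatorname{Ran}T^*}$), uniqueness of the polar decomposition is needed only to justify calling it \emph{the} polar decomposition.
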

%%%%%%%%%%%%%%%%%%%%%%%%
\noindent We recall below the quaternionic operator inequalities established in the previous works, which form the foundational toolkit for quaternionic operator theory. The quaternionic L\"owner-Heinz and H\"older-McCarthy inequalities will be used directly in the proofs of our main results; the quaternionic Furuta and Kantorovich inequalities are included for completeness.

%%%%%%%%%%%%%%%%%%%%%%%%%%%%
\begin{Theorem}\label{H-M-L}
Let $S, T\in\mathfrak{B}(\mathsf{H})$ and $S\geq T\geq 0$, then for all $r \geq 0$,
\begin{itemize}
\item[(i)] \cite{Adv.} Quaternionic H\"{o}lder-McCarthy's inequality (H-M inequality): \\
For all unit vectors $x\in \mathsf{H}$
\begin{itemize}
\item[(a)]
$\langle T^r x, x\rangle \geq \langle Tx, x\rangle ^r $ for all $r>1$,
\item[(b)] $\langle T^r x, x\rangle \leq \langle Tx, x\rangle ^r$, for all $r\in (0,1).$
\end{itemize}
\item[(ii)] \cite{Adv.} Quaternionic  L\"{o}wner-Heinz inequality (L-H inequality): $S^r\geq T^r$ for all $r\in [0,1]$.
\item[(iii)] \cite{JANA} Quaternionic Furuta inequality (F inequality): If $p\geq 0 $, $ q\geq 1$ and $(1+2r)q \geq p+2r$ then
\[(T^rS^pT^r)^{\frac{1}{q}}\geq T^{\frac{p+2r}{q}} \qquad \mbox{ and }\qquad  S^{{\frac{p+2r}{q}}}\geq (S^rT^pS^r)^{\frac{1}{q}}.\]
\item[(iv)] \cite{Kont} Quaternionic Kantorovich inequality: If  $\sigma_S(T)\subset [m, M]$, for $0 <m<M$, then for all unit vectors $x\in \mathsf{H}$
\[ \langle Tx, x\rangle \langle T^{-1}x, x\rangle \leq \frac{(M+m)^2}{4Mm}.\]

\end{itemize}
\end{Theorem}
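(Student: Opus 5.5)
Since Theorem \ref{H-M-L} collects four results, I would prove its items in the order (i), (ii), (iv), (iii). Each item rests only on the real continuous functional calculus of Theorem \ref{remark}, the spectral bounds and the spectral-radius identity of Theorem \ref{Bounds}, and Lemma \ref{spec-order}.

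For (i), fix a unit vector $x$ and set $a=\langle Tx,x\rangle\ge 0$.
\begin{itemize}
\item If $a>0$ and $r>1$, convexity of $t\mapsto t^r$ on $[0,\infty)$ gives the scalar inequality $t^r\ge a^r+ra^{r-1}(t-a)$ for all $t\ge 0$. Applying positivity of $\Phi_T$ (Theorem \ref{remark}(ii)) on $\sigma_S(T)\subset[0,\infty)$ turns this into the operator inequality $T^r\ge a^rI+ra^{r-1}(T-aI)$. Evaluating at $x$ kills the linear term and yields $\langle T^rx,x\rangle\ge a^r$.
\item For $r\in(0,1)$ the tangent-line inequality reverses by concavity, giving (b) in the same way.
\item If $a=0$, then $\|T^{1/2}x\|^2=\langle Tx,x\rangle=0$, so $Tx=0$. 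Approximating $t^r$ by polynomials without constant term, uniformly on $\sigma_S(T)$, gives $T^rx=0$, and both inequalities hold trivially.
\end{itemize}

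For (iv), the function $(M-t)(t-m)/t$ is nonnegative on $[m,M]\supset\sigma_S(T)$. The functional calculus then gives $T+mMT^{-1}\le (M+m)I$. Taking inner products at $x$ and using the arithmetic--geometric mean inequality $2\sqrt{mM\langle Tx,x\rangle\langle T^{-1}x,x\rangle}\le\langle Tx,x\rangle+mM\langle T^{-1}x,x\rangle$ gives the Kantorovich bound.

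For (ii), I would first assume $S$ invertible, replacing $S$ by $S+\varepsilon I$ and letting $\varepsilon\to0$ at the end, using norm continuity of $\Phi$ and the fact that $(S+\varepsilon I)^r\to S^r$.
\begin{itemize}
\item For positive invertible $A$, the condition $A\ge B\ge0$ is equivalent to $\|B^{1/2}A^{-1/2}\|\le 1$. This follows from Lemma \ref{spec-order}(b) applied with $C=A^{-1/2}$, together with Theorem \ref{Bounds}(i), which identifies the norm of a positive operator with $\max\sigma_S$.
\item Let $E=\{r\in[0,1]:S^r\ge T^r\}$. It contains $0$ and $1$ and is closed. To show it is midpoint-convex, take $a,b\in E$. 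By Theorem \ref{Bounds}(ii), $r_S(S^{-(a+b)/4}T^{(a+b)/2}S^{-(a+b)/4})=r_S(S^{-a/2}T^{(a+b)/2}S^{-b/2})$.
\item The right-hand side is at most $\|S^{-a/2}T^{a/2}\|\,\|T^{b/2}S^{-b/2}\|\le1$.
\item Since the left-hand operator is positive, its spectral radius equals its norm, which yields $S^{(a+b)/2}\ge T^{(a+b)/2}$.
\end{itemize}
Closed plus midpoint-convex gives $E=[0,1]$.

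For (iii), I would follow Fujii's or Kamei's short proof of Furuta's inequality. The key identity is
$$(T^rS^pT^r)^s=T^rS^{p/2}(S^{p/2}T^{2r}S^{p/2})^{s-1}S^{p/2}T^r.$$
I would obtain it by writing $T^rS^{p/2}=V|S^{p/2}T^r|$ and using Corollary \ref{T^*}(i) to get $f(T^rS^pT^r)=Vf(S^{p/2}T^{2r}S^{p/2})V^*$ for $f(t)=t^{s}$ with $s\in[0,1]$, or more generally via the polynomial approximation of Lemma \ref{U^*SU}.
\begin{itemize}
\item Starting from (ii), first prove the case $r\in[0,1/2]$ with $q=(p+2r)/(1+2r)$.
\item Then double $r$ inductively, each step using (ii) and the identity above.
\end{itemize}
The second inequality in (iii) should follow from the first by the standard inversion/duality argument (applied to $S+\varepsilon I$, $T+\varepsilon I$).

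I expect the main obstacle to be the induction step of (iii). There one must check carefully that every operator to which $t^s$ is applied has S-spectrum in $[0,\infty)$, which is where Lemma \ref{spec-order} is needed. One must also justify the polar-decomposition identity for non-invertible $T^rS^{p/2}$; I would try first to avoid this by working with the invertible $\varepsilon$-perturbations.
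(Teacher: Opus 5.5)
\textbf{Overall.} Your proposal is sound for (i), (ii) and (iv). Item (iii) is a viable outline of Furuta's proof, but its base case is misstated and it omits several needed steps.

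\textbf{Comparison with the paper.} The paper gives no proof of Theorem~\ref{H-M-L}. It is a compilation of results quoted, with citations, from earlier quaternionic papers. Your proposal is therefore a genuinely different, self-contained route: everything is derived from the positive real functional calculus (Theorem~\ref{remark}), Theorem~\ref{Bounds} and Lemma~\ref{spec-order}.
\begin{itemize}
\item Items (i) and (iv) are complete. These are the tangent line at $a=\langle Tx,x\rangle$, and $(M-t)(t-m)/t\ge 0$ plus AM--GM. They use only real scalars, $\Phi_T(\mathrm{id})=T$, and $\langle Tx,x\rangle\in\mathbb{R}$.
\item Item (ii) is Pedersen's midpoint argument and is sound. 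It also explains why $r_S(ST)=r_S(TS)$ belongs in the toolkit. You can close it without ``norm $=$ spectral radius'': $r_S(X)\le 1$ for the positive $X=S^{-c}T^{2c}S^{-c}$ gives $\sigma_S(X)\subset[0,1]$, hence $X\le I$ by Lemma~\ref{spec-order}(a).
\item You do use two standard facts the paper never records. One is $r_S(Y)\le\|Y\|$ for the non-selfadjoint $S^{-a/2}T^{(a+b)/2}S^{-b/2}$. The other is the composition rule $(X^{\alpha})^{\beta}=X^{\alpha\beta}$.
\end{itemize}
The citation route buys brevity. Yours makes the later uses of L-H (e.g.\ in Theorem~\ref{Menkad}) rest only on the functional calculus and Theorem~\ref{Bounds}, though the latter comes from the same source as (i)--(ii).

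\textbf{Repairs needed in (iii).}

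\emph{The base case is false as stated for $p<1$.} The case ``$r\in[0,1/2]$, $q=(p+2r)/(1+2r)$'' holds only for $p\ge 1$. For $0<p<1$ and $r>0$ this $q$ is $<1$, and the inequality can fail. Take $r=1/2$. The $p$-derivative at $p=0$ of $(T^{1/2}S^pT^{1/2})^{2/(1+p)}-T^2$ is $TM+MT$, where $M=T^{1/2}(\log S-\log T)T^{1/2}$. This is indefinite for $T=\mathrm{diag}(1,\epsilon)$, $S=T+\left[\begin{smallmatrix}1&1\\1&1\end{smallmatrix}\right]$ and small $\epsilon$. So you must handle the ranges separately:
\begin{itemize}
\item For $p\in[0,1]$: get $S^p\ge T^p$ by (ii), conjugate by $T^r$, then apply (ii) with exponent $1/q$.
\item For $p\ge 1$: prove the case $q_0=(p+2r)/(1+2r)$, then pass to $q\ge q_0$ by (ii) with exponent $q_0/q$.
\end{itemize}
The restriction $p\ge1$ is exactly what the induction needs. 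The inductive step applies the base case to $(T^rS^pT^r)^{1/q_0}\ge T^{1+2r}$ with new exponent $p_1=q_0\ge 1$, and reaches $r+r_1(1+2r)$.

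\emph{The base case needs order-reversal of inverses.} Besides (ii), you need that $X\ge Y>0$ implies $Y^{-1}\ge X^{-1}$. This is what reverses $(S^{p/2}T^{2r}S^{p/2})^{(p-1)/(p+2r)}\le S^{p-1}$. It follows from Lemma~\ref{spec-order}, and your inversion argument for the second inequality uses it too.

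\emph{The polar decomposition is misstated.} Writing $T^rS^{p/2}=V|S^{p/2}T^r|$ is a slip: $|S^{p/2}T^r|^2=T^rS^pT^r$ is $|Y^*|^2$ for $Y=T^rS^{p/2}$. In the perturbed, invertible case you can avoid polar decompositions entirely. The identities $YY^*=Y(Y^*Y)Y^{-1}$ and $Y^{-1}=(Y^*Y)^{-1}Y^*$ give $(YY^*)^s=Y(Y^*Y)^{s-1}Y^*$ directly.

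\emph{The $\varepsilon\to 0$ limits need justification.} They require continuity of $X\mapsto f(X)$ in the operator variable. This follows from polynomial approximation on a common interval together with $\|g(X)\|\le\sup_{\sigma_S(X)}|g|$. That bound is itself a consequence of the positivity of $\Phi_X$.
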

\noindent The results of the following theorem are quaternionic analogues of classical results derived from the Kantorovich inequality. These results are included for completeness and to illustrate the breadth of quaternionic operator inequalities; however, they are not required for the proofs of our main results in the subsequent sections.\\
\noindent
\noindent Part (i) in the following theorem is the quaternionic extension of Theorem 1.29 in \cite{Mond} and the second part is  the quaternionic extension of Theorem 8.1 in \cite{Mond}.
\begin{Theorem}\label{resultkont2}
Let $S, T\in\mathfrak{B}(\mathsf{H})$ and $S\geq T > 0$, and $\sigma_S(T)\subset [m,M]$ for $0<m \leq M$. Then
\begin{itemize}
\item[(i)] $\langle T^2x, x\rangle \leq \frac{(M+m)^2}{4Mm} \langle Tx, x\rangle^2$, for all unit vectors $x\in \mathsf{H}$;
\item[(ii)] $\frac{(M+m)^2}{4Mm} S^2 \ge T^2$.
\end{itemize}
\end{Theorem}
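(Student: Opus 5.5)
Part (i) will follow from the quaternionic Kantorovich inequality (Theorem \ref{H-M-L}(iv)) applied to suitable vectors; part (ii) will then follow from (i) by a Löwner--Heinz-type comparison. Throughout, all functions of $T$ are defined by the real functional calculus of Theorem \ref{remark}. Since $T>0$ and $\sigma_S(T)\subset[m,M]$ with $m>0$, Theorem \ref{remark} gives us $T^{1/2}$ and $T^{-1}$. Lemma \ref{spec-order}(a) gives $mI\le T\le MI$.

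\textbf{Part (i).} I will avoid the substitution $x\mapsto T^{1/2}x$ in Kantorovich and work with a quadratic-polynomial estimate instead, since $\langle T^2x,x\rangle$ is not directly of Kantorovich form. The function $g(t)=(M-t)(t-m)/t$ is nonnegative on $[m,M]$, so Theorem \ref{remark}(ii) gives
\[(M+m)T - MmI - T^2\ge 0.\]
Applying Lemma \ref{spec-order}(b) with $C=T^{1/2}$ is unnecessary here; it suffices to evaluate directly. For a unit vector $x$, writing $a=\langle Tx,x\rangle$, we get
\[\langle T^2x,x\rangle\le (M+m)a-Mm.\]
The elementary inequality $(M+m)a-Mm\le \frac{(M+m)^2}{4Mm}a^2$ holds for every real $a$, because it is equivalent to $\big(\frac{(M+m)a}{2\sqrt{Mm}}-\sqrt{Mm}\big)^2\ge 0$. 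This gives (i). The only quaternionic subtlety is that $\langle\cdot x,x\rangle$ is real for selfadjoint operators, which is standard.

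\textbf{Part (ii).} For any $x$, part (i) applied to $x/\|x\|$ gives
\[\|Tx\|^2=\langle T^2x,x\rangle\le K\langle Tx,x\rangle^2/\|x\|^2,\qquad K=\frac{(M+m)^2}{4Mm}.\]
Since $0\le T\le S$, we have $\langle Tx,x\rangle\le\langle Sx,x\rangle\le \|Sx\|\,\|x\|$ by Cauchy--Schwarz, which holds in quaternionic Hilbert spaces with real modulus. Hence
\[\langle T^2x,x\rangle\le K\|Sx\|^2=K\langle S^2x,x\rangle.\]
This is exactly $KS^2\ge T^2$.

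The step I expect to require the most care is justifying $(M+m)T-MmI-T^2\ge0$ through the functional calculus, that is, checking that $\Phi_T$ sends $t^2$ to $T^2$. This holds because $\Phi_T$ is a unital algebra homomorphism, and positivity then applies on $\sigma_S(T)\subset[m,M]$. Everything else reduces to scalar inequalities and Cauchy--Schwarz.
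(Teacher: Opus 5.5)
Your argument is correct, but part (i) takes a different route from the paper. The paper gets (i) in one line from the quaternionic Kantorovich inequality, Theorem~\ref{H-M-L}(iv), evaluated at the unit vector $y=T^{1/2}x/\|T^{1/2}x\|$, for which $\langle Ty,y\rangle\langle T^{-1}y,y\rangle=\langle T^2x,x\rangle/\langle Tx,x\rangle^2$. This makes (i) a corollary of the cited result. The cost is that it needs $T^{1/2}$, $T^{-1}$, the identities $T^{1/2}TT^{1/2}=T^2$ and $T^{1/2}T^{-1}T^{1/2}=I$, and $T^{1/2}x\neq 0$.

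You instead prove the bound directly. Positivity of the functional calculus on the quadratic $(M-t)(t-m)\ge 0$ gives $\langle T^2x,x\rangle\le (M+m)\langle Tx,x\rangle-Mm$, and completing the square finishes. This is the standard mechanism behind the Kantorovich inequality itself. Your proof is therefore self-contained, does not depend on the external quaternionic Kantorovich result, and uses no square roots or inverses.

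One slip to fix: the function to cite is the polynomial $(M-t)(t-m)$. Applying the calculus to $g(t)=(M-t)(t-m)/t$, as you wrote it, yields $(M+m)I-MmT^{-1}-T\ge 0$. That gives your inequality only after conjugating by $T^{1/2}$ via Lemma~\ref{spec-order}(b), which is the step you declared unnecessary. Your opening sentence also still announces the Kantorovich route and a L\"owner--Heinz comparison, neither of which you actually use.

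Part (ii) is essentially the paper's proof. The paper appeals to the H-M inequality with exponent $2$, $\langle Sx,x\rangle^2\le\langle S^2x,x\rangle$ for unit $x$, and that is precisely your Cauchy--Schwarz step. Both arguments tacitly use that $S\ge T>0$ makes $S$ positive, hence selfadjoint, so that $\|Sx\|^2=\langle S^2x,x\rangle$.
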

\begin{proof} (i) By substituting $\frac{T^{\frac{1}{2}}x}{\Vert T^{\frac{1}{2}}x\Vert }$ for a unit vector $x$ in the Kantorovich inequality the desired inequality follows. \\
\noindent (ii) For every unit vector $x\in \mathsf{H}$ we have
\begin{eqnarray*}
\langle T^2x, x\rangle &\leq & \frac{(M+m)^2}{4Mm} \langle Tx, x\rangle^2 \qquad  \mbox{(By (i))} \\
& \leq & \frac{(M+m)^2}{4Mm} \langle Sx, x\rangle ^2 \qquad \mbox{(Since}\, S\geq T > 0) \\
& \leq & \frac{(M+m)^2}{4Mm} \langle S^2x, x\rangle. \qquad \mbox{(by H-M inequality)}
\end{eqnarray*}
\end{proof}
%%%%%%%%%%%%%%%%%%%%%%%%%%%%%%%%%%%%%%%%%%%%%%%%%%%%%%%%%%%%%%%%
\section{Quaternionic Hansen--Pedersen Inequality}
Let \(\mathsf H_1, \mathsf H_2\) be right quaternionic Hilbert spaces with inner products \(\langle \cdot, \cdot \rangle_1\) and \(\langle \cdot, \cdot \rangle_2\), respectively. Their orthogonal direct sum is the right quaternionic Hilbert space
\[
\mathsf H := \mathsf H_1 \oplus \mathsf H_2 := \{ (x_1, x_2) : x_1 \in \mathsf H_1, \; x_2 \in \mathsf H_2 \},
\]
equipped with componentwise addition, right quaternionic scalar multiplication
\[
(x_1, x_2) q := (x_1 q, x_2 q), \qquad q \in \mathbb H,
\]
and inner product
\[
\langle (x_1, x_2), (y_1, y_2) \rangle_{\mathsf H}
:= \langle x_1, y_1 \rangle_1 + \langle x_2, y_2 \rangle_2.
\]
The norm is given by \(\| (x_1, x_2) \|_{\mathsf H}^2 = \|x_1\|_1^2 + \|x_2\|_2^2\).
For \(T_1 \in \mathfrak B(\mathsf H_1)\) and \(T_2 \in \mathfrak B(\mathsf H_2)\), their direct sum is the operator
\[
T_1 \oplus T_2 \in \mathfrak B(\mathsf H_1 \oplus \mathsf H_2), \qquad (T_1 \oplus T_2)(x_1, x_2) := (T_1 x_1, T_2 x_2).
\]
The operator norm satisfies
\[
\|T_1 \oplus T_2\|_{\mathfrak B(\mathsf H_1 \oplus \mathsf H_2)}
= \max\big\{ \|T_1 \|_{\mathfrak B(\mathsf H_1)},\; \|T_2\|_{\mathfrak B(\mathsf H_2)} \big\}.
\]
Moreover, if \(T_1\) and \(T_2\) are self-adjoint, then \(T_1 \oplus T_2\) is self-adjoint on \(\mathsf H_1 \oplus \mathsf H_2\). The direct sum is invertible if and only if both \(T_1\) and \(T_2\) are invertible, and in that case
$(T_1 \oplus T_2)^{-1} = T_1^{-1} \oplus T_2^{-1}$.
\begin{Proposition}\label{directsum} (Quaternionic Functional Calculus on Direct Sums)
Let $\mathsf{H}_1$ and $\mathsf{H}_2$ be right quaternionic Hilbert spaces, $T_1\in\mathfrak{B}(\mathsf{H}_1)$, $T_2\in\mathfrak{B}(\mathsf{H}_2)$ be self-adjoint, $T:=T_1\oplus T_2\in\mathfrak B(\mathsf H_1\oplus\mathsf H_2)$. Then for every $f\in \mathcal{C}(\sigma_S(T_1)\cup\sigma_S(T_2);\mathbb{R})$,
\begin{equation}
f(T_1\oplus T_2)=f(T_1)\oplus f(T_2).
\end{equation}
\end{Proposition}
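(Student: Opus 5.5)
The plan is to check the identity on real polynomials, pass to continuous functions by uniform approximation, and invoke the uniqueness part of Theorem \ref{remark}. Before that, the functional calculus for $T$ must be defined on the right set.

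\textbf{Step 1: $\sigma_S(T)=\sigma_S(T_1)\cup\sigma_S(T_2)$.} For $q\in\mathbb H$ we have
\[
\Delta_q(T)=T^2-T(q+\bar q)+I|q|^2=\Delta_q(T_1)\oplus\Delta_q(T_2),
\]
because $q+\bar q$ and $|q|^2$ are real scalars acting componentwise. By the invertibility remark preceding the proposition, $\Delta_q(T)$ is boundedly invertible if and only if both $\Delta_q(T_1)$ and $\Delta_q(T_2)$ are. This gives the spectral equality, so $f(T)$ makes sense. Since $T$ is selfadjoint, all three S-spectra are compact subsets of $\mathbb R$ by Theorem \ref{Bounds}(i).

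\textbf{Step 2: polynomials.} Let $p(t)=\sum_{k=0}^n a_kt^k$ with real coefficients. Then $(T_1\oplus T_2)^k=T_1^k\oplus T_2^k$, and real scalar multiples and sums act componentwise. Hence $p(T)=p(T_1)\oplus p(T_2)$, where each side is the image of $p$ under the respective homomorphism $\Phi$ of Theorem \ref{remark}. This uses that a unital real algebra homomorphism sends the identity function $t$ to the operator itself.

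\textbf{Step 3: approximation.} Set $K=\sigma_S(T_1)\cup\sigma_S(T_2)$, a compact subset of $\mathbb R$, and let $f\in\mathcal C(K,\mathbb R)$. By Weierstrass there are real polynomials $p_n\to f$ uniformly on $K$, hence also uniformly on each $\sigma_S(T_i)\subset K$. Continuity of $\Phi_T$, $\Phi_{T_1}$, $\Phi_{T_2}$ gives $p_n(T)\to f(T)$ and $p_n(T_i)\to f(T_i)$ in operator norm. Using $\|A_1\oplus A_2\|=\max\{\|A_1\|,\|A_2\|\}$, we get $p_n(T_1)\oplus p_n(T_2)\to f(T_1)\oplus f(T_2)$. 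Step 2 then yields the identity.

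An alternative for Step 3 avoids explicit limits. The map $f\mapsto f(T_1)\oplus f(T_2)$ is itself a continuous unital real algebra homomorphism into $\mathfrak B(\mathsf H_1\oplus\mathsf H_2)$, and it is selfadjoint-valued and positive (positivity is checked on $\langle (A_1\oplus A_2)(x_1,x_2),(x_1,x_2)\rangle=\langle A_1x_1,x_1\rangle+\langle A_2x_2,x_2\rangle$). It agrees with $\Phi_T$ on polynomials, so it equals $\Phi_T$. Either route works.

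The main obstacle is Step 1, together with the bookkeeping that $f$ restricted to $\sigma_S(T_i)$ is what is meant by $f(T_i)$. The invertibility of the direct sum needs care: a bounded inverse of $\Delta_q(T)$ must preserve the components. I would argue this by noting that the inverse commutes with the projections $I\oplus 0$ and $0\oplus I$, since $\Delta_q(T)$ does. The rest is routine.
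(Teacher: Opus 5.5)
Your proposal is correct and follows essentially the same route as the paper: the spectral identity from $\Delta_q(T_1\oplus T_2)=\Delta_q(T_1)\oplus\Delta_q(T_2)$, the identity on real polynomials, and Weierstrass approximation combined with continuity of the functional calculus and of the direct sum in operator norm. Your commuting-projection argument for the inverse and your uniqueness-based alternative add care but no new idea, and one wording fix is needed: $\Phi_T(\mathrm{id})=T$ is a normalization built into the functional calculus, not a consequence of $\Phi_T$ being a unital homomorphism, which only gives $\Phi_T(1)=I$.
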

\begin{proof}
Let \(p(x) = \sum_{k=0}^n a_k x^k\) be any real polynomial (i.e., \(a_k \in \mathbb R\) for all \(k\)). For any \(m \ge 0\), the direct sum acts componentwise, so
$(T_1 \oplus T_2)^m = T_1^m \oplus T_2^m$.
Therefore,
\[\begin{aligned}
p(T_1 \oplus T_2)
&= \sum_{k=0}^n a_k (T_1 \oplus T_2)^k = \sum_{k=0}^n a_k (T_1^k \oplus T_2^k)= \left( \sum_{k=0}^n a_k T_1^k \right) \oplus \left( \sum_{k=0}^n a_k T_2^k \right)\\
&= p(T_1) \oplus p(T_2).
\end{aligned}
\]
Thus the identity holds for every real polynomial
\begin{equation}\label{p}
p(T_1 \oplus T_2) = p(T_1) \oplus p(T_2).
\end{equation}
We next claim that
\begin{equation}\label{S}
\sigma_S(T_1 \oplus T_2) = \sigma_S(T_1) \cup \sigma_S(T_2).
\end{equation}
\noindent By Definition \ref{spectrum} and direct computation $\Delta_q(T_1 \oplus T_2) = \Delta_q(T_1) \oplus \Delta_q(T_2)$.
Since a direct sum of operators is invertible if and only if each summand is invertible, we have
\[
\begin{aligned}
q \notin \sigma_S(T_1 \oplus T_2)
&\iff \Delta_q(T_1 \oplus T_2) \text{ is invertible} \\
&\iff \Delta_q(T_1) \oplus \Delta_q(T_2) \text{ is invertible} \\
&\iff \Delta_q(T_1) \text{ is invertible and } \Delta_q(T_2) \text{ is invertible} \\
&\iff q \notin \sigma_S(T_1) \text{ and } q \notin \sigma_S(T_2) \\
&\iff q \notin \sigma_S(T_1) \cup \sigma_S(T_2).
\end{aligned}
\]
Hence (\ref{S}) holds.
\noindent Since \(T_1\) and \(T_2\) are self-adjoint, their S-spectra are compact subsets of \(\mathbb R\) (see, e.g., \cite{Col-book}, Theorem 4.3). Therefore,
$
K := \sigma_S(T_1) \cup \sigma_S(T_2) \subset \mathbb R
$
is compact.\\
Finally, if \(f \in C(K; \mathbb R)\), then by  the classical Weierstrass approximation theorem, there exists a sequence of real polynomials \(\{p_n\}_{n=1}^\infty\) converging uniformly to $f$ on $K$, which means
\[
\lim_{n \to \infty} \max_{x \in K} |p_n(x) - f(x)| = 0.
\]
By continuity of the functional
calculus in Theorem 2, we imply
\[
p_n(A) \to f(A) \quad \text{in operator norm}.
\]
Applying this to \(A = T_1\), \(A = T_2\), and \(A = T_1 \oplus T_2\), and using (\ref{S}), we obtain
\[
p_n(T_1) \to f(T_1), \qquad
p_n(T_2) \to f(T_2), \qquad
p_n(T_1 \oplus T_2) \to f(T_1 \oplus T_2)
\]
in operator norm. From (\ref{p}), we have $p_n(T_1 \oplus T_2) = p_n(T_1) \oplus p_n(T_2)$, for each \(n\). Using the fact that the direct sum is continuous with respect to the operator norm, we get
\[
f(T_1 \oplus T_2) = f(T_1) \oplus f(T_2).
\]
\end{proof}
\noindent A continuous real-valued function \(f\) on an interval \(I\) is
quaternionic operator convex if, for every right quaternionic Hilbert
space \(\mathsf{K}\), every selfadjoint
\(A,B \in B\left( \mathsf{K} \right)\) with S-spectra contained in
\(I\), and every \(\lambda \in \lbrack 0,1\rbrack\),
\[f\left( (1 - \lambda)A + \lambda B \right) \leq (1 - \lambda)f(A) + \lambda f(B).\]
It is operator concave if the inequality is reversed.
\begin{Theorem}\label{Hansen} (Quaternionic Hansen--Pedersen Inequality) Let
\(\mathsf{H}\) be a right quaternionic Hilbert space. Let
\(T \in B\left( \mathsf{H} \right)\) be selfadjoint with $\sigma_{S}(T) \subset I$, where \(I \subset \mathbb{R}\) is an interval, and let
\(f:I \rightarrow \mathbb{R}\) be quaternionic operator convex. If
\(U \in B\left( \mathsf{H} \right)\) is an isometry,
\(U^{*}U = I\), then
\begin{equation}\label{HI}
 f\left( U^{*}TU \right) \leq U^{*}f(T)U.
\end{equation}
\end{Theorem}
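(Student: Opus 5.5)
We follow the block-unitary method of Pe\v{c}ari\'{c} et al.\ \cite{Mond}, carried out on the direct sum $\mathsf{H}\oplus\mathsf{H}$. Set $P:=I-UU^*$; since $U^*U=I$, $P$ is an orthogonal projection with $U^*P=0$ and $PU=0$. Define on $\mathsf{H}\oplus\mathsf{H}$ the block operators
\[
W:=\begin{pmatrix} U & P\\ 0 & -U^*\end{pmatrix},\qquad
V:=\begin{pmatrix} U & -P\\ 0 & -U^*\end{pmatrix},\qquad
\widetilde T:=T\oplus T .
\]
Using $U^*U=I$, $U^*P=0$, $PU=0$ and $P^2=P$, a direct block computation gives $W^*W=WW^*=I$ and $V^*V=VV^*=I$. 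These are unitary right-linear operators, and the adjoint of a block operator is the conjugate transpose of its entries, which can be checked directly from the direct-sum inner product. The key identity is
\[
W^*\widetilde T W=\begin{pmatrix} U^*TU & U^*TP\\ PTU & PTP+UTU^*\end{pmatrix},
\]
and $V^*\widetilde T V$ is the same matrix with the off-diagonal entries negated. Consequently
\[
\tfrac12\bigl(W^*\widetilde T W+V^*\widetilde T V\bigr)=U^*TU\oplus (PTP+UTU^*).
\]

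We then apply $f$ to both sides. By Proposition \ref{directsum}, $\sigma_S(\widetilde T)=\sigma_S(T)\subset I$, and by Lemma \ref{U^*SU} (unitary case), $f(W^*\widetilde TW)=W^*f(\widetilde T)W$ with $\sigma_S(W^*\widetilde TW)=\sigma_S(\widetilde T)\subset I$, and similarly for $V$. The spectral inclusion follows from Lemma \ref{spec-order}: $mI\le \widetilde T\le MI$, where $[m,M]$ is the range of $\sigma_S(T)$, is preserved under unitary conjugation, so all operators involved have S-spectrum in $[m,M]\subset I$. The same lemma shows that the convex combination also lies in $[m,M]$, so $f$ can be applied to it. Operator convexity with $\lambda=\tfrac12$ on $\mathsf{K}=\mathsf{H}\oplus\mathsf{H}$, followed by Proposition \ref{directsum}, gives
\[
f(U^*TU)\oplus f(PTP+UTU^*)\le \tfrac12\bigl(W^*f(\widetilde T)W+V^*f(\widetilde T)V\bigr).
\]
Since $f(\widetilde T)=f(T)\oplus f(T)$, the same block computation with $T$ replaced by $f(T)$ shows that the right-hand side equals $U^*f(T)U\oplus(Pf(T)P+Uf(T)U^*)$.

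Finally, we compress to the first summand. Apply Lemma \ref{spec-order}(b) with $C=J:\mathsf{H}\to\mathsf{H}\oplus\mathsf{H}$, $x\mapsto(x,0)$, whose adjoint is the projection onto the first coordinate. The left side compresses to $f(U^*TU)$ and the right side to $U^*f(T)U$, which gives \eqref{HI}.

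We expect the main obstacle to be the bookkeeping rather than any deep idea. First, each block product must be verified using only right-linearity; this is safe because all scalars appearing, namely $\pm1$ and $\tfrac12$, are real. Second, every operator to which $f$ is applied must have S-spectrum inside $I$. This is handled entirely through the order characterization in Lemma \ref{spec-order}(a), with the only subtlety being an endpoint of $I$ that is not attained, where $[m,M]\subset I$ is still guaranteed by the compactness of $\sigma_S(T)$.
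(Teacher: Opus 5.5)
Your proposal is correct and follows the paper's proof essentially step for step. You use the same block unitaries on $\mathsf{H}\oplus\mathsf{H}$: your $P=I-UU^*$ coincides with the paper's $(I-UU^*)^{1/2}$ because $P$ is a projection, and the sign of the $(2,2)$ entry of your second unitary is immaterial. The remaining steps also match the paper: the spectral bookkeeping via Lemma \ref{spec-order}, Proposition \ref{directsum} and unitary invariance, and the final extraction of the $(1,1)$ entry, which you make explicit through the compression by $J$.
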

\begin{proof} Define operators $A$ and $B$ on $\mathsf{H}\oplus\mathsf{H}$ by
\begin{equation*}
A=\left[\begin{array}{cc}
U&V\\
0&-U^*
\end{array}\right],\,\,\,
B=\left[\begin{array}{cc}
U&-V\\
0&U^*
\end{array}\right],
\end{equation*}
where $V=(I_{\mathsf{H}}-UU^*)^{1/2}$. Therefore, $V^2+UU^*=I_{\mathsf{H}}$, $U^*V^2=U^*-U^*UU^*=0$ and $V^2U=U-UU^*U=0$. Since $V$ is selfadjoint, for every $u \in \mathsf{H}$ we have
\[
\langle V^2u, u \rangle = \langle Vu, V^*u \rangle = \langle Vu, Vu \rangle = \Vert Vu\Vert ^2.
\]
Applying this with $u = Uu_0$ for arbitrary $u_0 \in \mathsf{H}$, and using $V^2U = 0$, gives $\Vert V(Uu_0) \Vert^2 = \langle V^2 Uu_0, Uu_0 \rangle = 0$, so $VUu_0 = 0$. As $u_0$ was arbitrary, $VU = 0$. Taking adjoints and using $V^* = V$ gives $U^*V = (VU)^* = 0$. It is easy to check that $A^*=\left[\begin{array}{cc}
U^*&0\\
V&-U
\end{array}\right]$. We claim that $A$ and $B$ are unitary operators on $\mathsf{H}\oplus\mathsf{H}$. To see this,
\begin{equation*}
AA^*=\left[\begin{array}{cc}
U&V\\
0&-U^*
\end{array}\right]
\left[\begin{array}{cc}
U^*&0\\
V&-U
\end{array}\right]=
\left[\begin{array}{cc}
UU^*+V^2&-VU\\
-U^*V&U^*U
\end{array}\right]=
\left[\begin{array}{cc}
I_{\mathsf{H}}&0\\
0&I_{\mathsf{H}}
\end{array}\right]=I_{\mathsf{H}\oplus\mathsf{H}}.
\end{equation*}

\begin{align*}
A^*A &=
\left[\begin{array}{cc}
U^* & 0 \\
V & -U
\end{array}\right]
\left[\begin{array}{cc}
U & V \\
0 & -U^*
\end{array}\right] =
\left[\begin{array}{cc}
U^*U & U^*V \\
VU & V^2 + UU^*
\end{array}\right] =
\left[\begin{array}{cc}
I_H & 0 \\
0 & I_H
\end{array}\right]= I_{H \oplus H}.
\end{align*}
The identical computation applied to $B$ gives $B^*B =I_{\mathsf{H}\oplus\mathsf{H}}= BB^*$. so $B$ is unitary too.
Let $X=\left[\begin{array}{cc}
T&0\\
0&T
\end{array}\right]$. Then
\begin{equation*}
A^*XA=\left[\begin{array}{cc}
U^*TU&U^*TV\\
VTU&VTV+UTU^*
\end{array}\right]\, \,\,\, \mbox{and}\,\,\,\,
B^*XB=\left[\begin{array}{cc}
U^*TU&-U^*TV\\
-VTU&VTV+UTU^*
\end{array}\right].
\end{equation*}
So,
\begin{equation*}
\frac{A^*XA+B^*XB}{2}=\left[\begin{array}{cc}
U^*TU&0\\
0&VTV+UTU^*
\end{array}\right].
\end{equation*}
Before applying $f$, we check that every operator involved has S-spectrum in $I$.
Let $m = \min\sigma_{S}(T)$ and $M = \max\sigma_{S}(T)$, then $\lbrack m,M\rbrack \subset I$, by Theorem \ref{Bounds} (i), and by Lemma \ref{spec-order}, $mI \leq T \leq MI$. Since $U^{*}U = I$, we get $mI \leq U^{*}TU \leq MI$. Next put $D = VTV + UTU^{*}$. Since $V^{2} + UU^{*} = I$,
we obtain
\[D - mI = V(T - mI)V + U(T - mI)U^{*} \geq 0,\]
and similarly
\[MI - D = V(MI - T)V + U(MI - T)U^{*} \geq 0.\]
Therefore $mI \leq D \leq MI$. Applying Lemma \ref{spec-order} again gives $\sigma_{S}\left( U^{*}TU \right) \subset \lbrack m,M\rbrack \subset I$, and $\sigma_{S}(D) \subset \lbrack m,M\rbrack \subset I$. For the unitary conjugates, use direct unitary invariance. If
\(Y = A^{*}XA\), then for every \(q \in \mathbb{H}\),
\[\Delta_{q}(Y) = A^{*}\Delta_{q}(X)A.\]
Hence \(\Delta_{q}(Y)\) is invertible if and only if \(\Delta_{q}(X)\)
is invertible, and consequently
\[\sigma_{S}\left( A^{*}XA \right) = \sigma_{S}(X).\]
The same argument applies to \(B\). Thus $f$ is legitimately applied to each operator below via Theorem \ref{remark}. By operator convexity of \(f\), Proposition
  \ref{directsum}, and unitary invariance of the functional calculus,
\begin{eqnarray*}
  \left[\begin{array}{cc}
f(U^*TU)&0\\
0&f(VTV+UWU^*)
\end{array}\right]&=&f\left[\begin{array}{cc}
U^*TU&0\\
0&VTV+UWU^*
\end{array}\right]\\
&=& f\left (\frac{A^*XA+B^*XB}{2}\right )\\
&\leq & \frac{f(A^*XA)+f(B^*XB)}{2}= \frac{A^*f(X)A+B^*f(X)B}{2}\\
&=& \left[\begin{array}{cc}
U^*f(T)U&0\\
0&V^*f(T)V+Uf(T)U^*
\end{array}\right].
\end{eqnarray*}
Comparison of the
  \((1,1)\) entries of the latter inequality shows that $f(U^*TU)\leq U^*f(T)U$.
\end{proof}
\begin{Remark}\label{Concave}

Since \(f\) is operator concave if and only if \(- f\) is operator
convex, Theorem 10 yields the reversed inequality.
It is also called Hansen--Pedersen inequality.
\end{Remark}
\section{An Application of Quaternionic Hansen--Pedersen Inequality}
\noindent We now apply the quaternionic Hansen--Pedersen inequality to the
transform \(S_{r}(T) = U|T|^{r}U\), that was defined in the complex case by Furuta in \cite{FSRT}. The following theorem is the extension of Theorem 3.1 in \cite{Srt} to the quaternionic setting.
\begin{Theorem}\label{Menkad}
Let $T\in\mathfrak{B}(\mathsf{H})$ be an injective $p$-hyponormal operator with $p \in (0,1]$ and $T= U\vert T\vert$ be its polar decomposition. Then for $r\in (0, \frac{1}{2}]$ the following assertions hold:
\begin{itemize}
\item[(i)] If $0 < p \le \frac{1}{2}$, then $S_r(T)= U |T|^r U$ is $2p$-hyponormal.
\item[(ii)] If $\frac{1}{2} < p \le 1$, then $S_r(T)$ is hyponormal.
\end{itemize}
\end{Theorem}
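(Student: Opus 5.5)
The plan is to compare $|S_r(T)|^{2}$ and $|S_r(T)^*|^{2}$ with the intermediate operators $|T|^{2r}$ and $|T^*|^{2r}$, and to pass between powers using the quaternionic L\"owner--Heinz inequality (Theorem \ref{H-M-L}(ii)). Where powers and conjugation by the isometry $U$ must be interchanged, I would use Lemma \ref{U^*SU} or Theorem \ref{Hansen}.

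I would not assume $U$ is unitary. Injectivity gives $\ker|T|=\ker T=\{0\}$, so $U^*U=I$. However, $P:=UU^*$ is only a projection, since an injective hyponormal operator, such as a shift, need not have injective adjoint. Write $S=S_r(T)=U|T|^rU$, so $S^*=U^*|T|^rU^*$. Then
\[
S^*S=U^*|T|^{r}U^*U|T|^rU=U^*|T|^{2r}U,\qquad SS^*=U|T|^rP|T|^rU^*\le U|T|^{2r}U^*=|T^*|^{2r}.
\]
The inequality uses $P\le I$ together with Lemma \ref{spec-order}(b) applied to $C=|T|^rU^*$. The final equality is Corollary \ref{T^*}(i).

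Next I extract two consequences of $p$-hyponormality. By Corollary \ref{T^*}(i), the hypothesis reads $|T|^{2p}\ge U|T|^{2p}U^*=|T^*|^{2p}$. Let $s\in(0,1]$. L\"owner--Heinz gives $|T|^{2ps}\ge (U|T|^{2p}U^*)^{s}$. Lemma \ref{U^*SU}, formula (\ref{U^*SU1}) with $f(t)=t^s$ and $f(0)=0$, identifies the right side as $U|T|^{2ps}U^*=|T^*|^{2ps}$. Conjugating by $U^*$ with Lemma \ref{spec-order}(b) and using $U^*U=I$ then yields
\[
|T|^{2ps}\ge|T^*|^{2ps},\qquad U^*|T|^{2ps}U\ge |T|^{2ps}\qquad(0<s\le1).
\]

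For case (ii), where $\tfrac12<p\le1$, take $s=r/p\le 1$. This is admissible because $r\le\tfrac12<p$. It gives the chain
\[
S^*S=U^*|T|^{2r}U\ge |T|^{2r}\ge|T^*|^{2r}\ge SS^*,
\]
so $S$ is hyponormal.

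For case (i), where $0<p\le\tfrac12$, we have $2p\le1$, and the aim is $(S^*S)^{2p}\ge(SS^*)^{2p}$. On the upper side, L\"owner--Heinz with exponent $2p$ applied to $SS^*\le|T^*|^{2r}$ gives $(SS^*)^{2p}\le|T^*|^{4pr}$. The bound above with $s=2r\le1$ then gives $|T^*|^{4pr}\le|T|^{4pr}$. On the lower side, $t\mapsto t^{2p}$ is operator concave on $[0,\infty)$. Remark \ref{Concave}, the concave form of Theorem \ref{Hansen} with $T$ replaced by $|T|^{2r}$, therefore gives
\[
(S^*S)^{2p}=(U^*|T|^{2r}U)^{2p}\ge U^*|T|^{4pr}U\ge|T|^{4pr},
\]
where the last step is the second bound above with $s=2r$. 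Chaining these yields $|S|^{4p}\ge|T|^{4pr}\ge|T^*|^{4pr}\ge|S^*|^{4p}$, so $S$ is $2p$-hyponormal.

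I expect the main obstacle to be the operator concavity of $t^{2p}$ in the quaternionic sense used in Theorem \ref{Hansen}. That sense requires the convexity inequality on every right quaternionic Hilbert space, and it is not among the recalled results. My first attempt would derive it from the quaternionic L\"owner--Heinz inequality via the standard monotone-implies-concave argument. That argument applies the $2\times2$ block-unitary trick of Theorem \ref{Hansen}, together with Proposition \ref{directsum}, to the operator monotone function $t^{2p}$ on $[0,\infty)$. A secondary point is checking that all spectra lie in $[0,\infty)$ whenever $f$ is applied. This follows from Lemma \ref{spec-order}(a), since every operator above is positive.
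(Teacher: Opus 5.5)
Your proposal is correct and follows the paper's proof. Part (i) is the same chain $(S^*S)^{2p}\ge U^*|T|^{4pr}U\ge|T|^{4pr}\ge U|T|^{4pr}U^*\ge(SS^*)^{2p}$, built from L\"owner--Heinz, Lemma~\ref{U^*SU} and the concave form of Theorem~\ref{Hansen}. In (ii), your single L\"owner--Heinz step with exponent $r/p$ just merges the paper's two steps: monotonicity of $p$-hyponormality, then (i) with $p=\tfrac12$, where Hansen--Pedersen is trivial because $f(t)=t$. As a side effect, your version shows hyponormality of $S_r(T)$ whenever $0<r\le p$.

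The obstacle you flag is genuine, since the paper applies Remark~\ref{Concave} to $t^{2p}$ without ever verifying quaternionic operator concavity. Your block-unitary fix works. Take $A$ as in Theorem~\ref{Hansen} and apply L\"owner--Heinz to $A^*(X\oplus 0)A\le (U^*XU+\varepsilon I)\oplus\lambda I$ with $\lambda$ large. Comparing $(1,1)$ entries and letting $\varepsilon\to0$ gives $(U^*XU)^{2p}\ge U^*X^{2p}U$ directly, so you do not even need the general concavity statement.
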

\begin{proof}
(i)  Injectivity of $T$ guarantees that $U$ is an isometry, i.e. $U^*U=I$ on $\mathsf{H}$. Assume $0 < p \le \frac{1}{2}$. Since $T$ is $p$-hyponormal, we have $|T|^{2p} \ge |T^*|^{2p}$. By Corollary \ref{T^*}, this is equivalent to $|T|^{2p} \ge U |T|^{2p} U^*$. Since $0<2r\leq 1$, by L-H inequality $(|T|^{2p})^{2r} \ge (U |T|^{2p} U^*)^{2r}$. By equality \eqref{U^*SU1} in Lemma \ref{U^*SU}  the right-hand side becomes $(U |T|^{2p} U^*)^{2r} = U (|T|^{2p})^{2r} U^*$.
Hence, $|T|^{4pr} \ge U |T|^{4pr} U^*$. Multiplying the latter inequality on the left by $U^*$ and on the right by $U$, we get $U^* |T|^{4pr} U \ge |T|^{4pr}$. Therefore
\begin{equation}\label{a}
U^* |T|^{4pr} U \ge |T|^{4pr} \ge U |T|^{4pr} U^*.
\end{equation}
\noindent On the other hand we have
\begin{eqnarray*}
(S_r(T)^* S_r(T))^{2p} & = & (U^* |T|^r U^* U |T|^r U )^{2p} = (U^* |T|^{2r} U)^{2p}\\
& \geq & U^* (|T|^{2r})^{2p} U = U^* |T|^{4pr} U \qquad (\mbox{By Remark \ref{Concave} with} f(t) = t^{2p}) \\
& \geq & U |T|^{4pr} U^* \qquad (\mbox{By \eqref{a}}) \\
& = & (U |T|^{2r} U^*)^{2p}. \qquad (\mbox{By \eqref{U^*SU1} in Lemma  \ref{U^*SU}})
\end{eqnarray*}
In the next step, since $I-UU^*$ is a positive operator, $U |T|^r U U^* |T|^r U^*\le U|T|^r I|T|^rU^*=U|T|^{2r}U^*$; applying the L-H inequality (valid since $2p\le1$) to raise both sides to the power $2p$ gives
\begin{eqnarray*}
(S_r(T) S_r(T)^*)^{2p} &=& (U |T|^r U U^* |T|^r U^* )^{2p} \\
&\leq &(U |T|^{2r} U^*)^{2p}.
\end{eqnarray*}
\noindent Hence,
\[
(S_r(T)^* S_r(T))^{2p} \ge (S_r(T) S_r(T)^*)^{2p}.
\]
Thus $S_r(T)$ is $2p$-hyponormal.\\
(ii) Assume $\frac{1}{2} < p \le 1$. By the monotonicity of $p$-hyponormality (see Proposition 9 in \cite{JANA}), $T$ is $\frac{1}{2}$-hyponormal. Applying Part (i) with $p = \frac{1}{2}$, we obtain that $S_r(T)$ is $1$-hyponormal, i.e., hyponormal. This completes the proof.
\end{proof}
\begin{Example}
Let $\ell^2(\mathbb{H})$ denote the right quaternionic Hilbert space of square-summable sequences with orthonormal basis $\{e_n\}_{n \ge 0}$. For a sequence of positive reals $\{w_n\}_{n \ge 0}$, define the weighted shift $T_w \in B(\ell^2(\mathbb{H}))$ by
\[
T_w e_n = e_{n+1} w_n.
\]
Since the weights are real (hence central), a direct computation as in the complex case gives
\[
T_w^* e_n = e_{n-1} w_{n-1} \quad (n \ge 1), \qquad T_w^* e_0 = 0,
\]
so
\[
T_w^* T_w e_n = w_n^2 e_n, \qquad T_w T_w^* e_n = w_{n-1}^2 e_n \quad (w_{-1} := 0).
\]
Both operators are diagonal in $\{e_n\}_{n \ge 0}$, hence $(T_w^* T_w)^p$ and $(T_w T_w^*)^p$ are simultaneously diagonal with eigenvalues $w_n^{2p}$ and $w_{n-1}^{2p}$. Thus $T_w$ is $p$-hyponormal iff $\{w_n\}$ is increasing, for any $p > 0$.
A simple bounded choice is $w_{n} = 2 - \frac{1}{n + 1},\quad\quad n \geq 0$. Then
\[1 = w_{0} < w_{1} < w_{2} < \cdots < 2,\]
so $\{w_n\}_{n \ge 0}$ is positive, bounded and increasing. Hence
the corresponding weighted shift \(T_{w}\) is bounded, injective and
\(p\)-hyponormal for every \(p > 0\). It is nonnormal because
\[T_{w}^{*}T_{w}e_{0} = w_{0}^{2}e_{0},\quad\quad T_{w}T_{w}^{*}e_{0} = 0.\]
For the polar decomposition, $Ue_{n} = e_{n + 1}$ and $\left| T_{w} \right|e_{n} = w_{n}e_{n}$. Therefore
\[S_{r}\left( T_{w} \right)e_{n} = w_{n + 1}^{r}e_{n + 2}.\]
Consequently,
\[S_{r}\left( T_{w} \right)^{*}S_{r}\left( T_{w} \right)e_{n} = w_{n + 1}^{2r}e_{n},\]
while
\[S_{r}\left( T_{w} \right)S_{r}\left( T_{w} \right)^{*}e_{n} = \left\{ \begin{matrix}
0, & n = 0,1, \\
w_{n - 1}^{2r}e_{n}, & n \geq 2.
\end{matrix} \right.\ \]
\noindent Since $\{w_n\}_{n \ge 0}$ is increasing, this example actually
gives a hyponormal \(S_{r}\left( T_{w} \right)\) for every \(r > 0\). 
\end{Example}

\noindent In the following theorem we generalize Theorem 2.3 in \cite{Patel} to the quaternionic transform $S_r(T)$.

\begin{Theorem}
Let $T\in\mathcal{LH}$  with polar decomposition $T = U |T|$, then  $S_r(T)\in\mathcal{LH}$ for $r>0$.
\end{Theorem}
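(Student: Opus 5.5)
The plan is to reduce everything to the two operators $U^*\log|T|\,U$ and $U\log|T|\,U^*$, using that the polar factor is unitary and that the continuous real functional calculus commutes with unitary conjugation (equality \eqref{U^*SU2} in Lemma \ref{U^*SU}).

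\textbf{Step 1: invertibility and positivity of spectra.} Since $T\in\mathcal{LH}$, $T$ is invertible. Hence $|T|$ is invertible and positive, so by Theorem \ref{Bounds}(i) and Lemma \ref{spec-order}(a) there are $0<m\le M$ with $\sigma_S(|T|)\subset[m,M]$. Thus $\log$ and all real powers $t\mapsto t^s$ are continuous on a neighbourhood of the relevant spectra. Moreover the polar factor $U$ of an invertible $T$ is unitary, so $U^*U=UU^*=I$. Consequently $S_r(T)=U|T|^rU$ is invertible, and it is a legitimate candidate for $\mathcal{LH}$.

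\textbf{Step 2: compute $|S_r(T)|$ and $|S_r(T)^*|$.} Using $UU^*=I$ we get
\[
S_r(T)^*S_r(T)=U^*|T|^{2r}U \quad\text{and}\quad S_r(T)S_r(T)^*=U|T|^{2r}U^*.
\]
By \eqref{U^*SU2} applied with $f(t)=t^{1/2}$, followed by $f=\log$, we obtain
\[
\log|S_r(T)| = U^*\log(|T|^{r})\,U = r\,U^*\log|T|\,U,
\]
and likewise $\log|S_r(T)^*| = r\,U\log|T|\,U^*$. The identity $\log(|T|^r)=r\log|T|$ follows from the composition rule $g(f(X))=(g\circ f)(X)$ in the functional calculus. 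This rule holds for real polynomials by direct computation, and it extends by uniform approximation together with continuity of $\Phi$ in Theorem \ref{remark}.

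\textbf{Step 3: the order inequality.} By Corollary \ref{T^*}(i), $|T^*|=U|T|U^*$, so \eqref{U^*SU2} gives $\log|T^*|=U\log|T|\,U^*$. The hypothesis $\log|T|\ge\log|T^*|$ therefore reads $\log|T|\ge U\log|T|\,U^*$. Conjugating this by $U^*(\cdot)U$, which preserves order by Lemma \ref{spec-order}(b), yields $U^*\log|T|\,U\ge \log|T|$. Chaining the two inequalities gives
\[
U^*\log|T|\,U\ \ge\ \log|T|\ \ge\ U\log|T|\,U^*.
\]
Multiplying by $r>0$ gives $\log|S_r(T)|\ge\log|S_r(T)^*|$, that is, $S_r(T)\in\mathcal{LH}$.

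\textbf{Main obstacle.} The delicate point is the functional-calculus bookkeeping in Step 2. This includes unitary invariance for $\log$ and $\sqrt{\cdot}$ on the S-spectrum, which requires $\sigma_S(U^*XU)=\sigma_S(X)$; this holds as in the proof of Theorem \ref{Hansen}, since $\Delta_q(U^*XU)=U^*\Delta_q(X)U$. It also includes the composition rule $\log(X^{s})=s\log X$. I will handle both by proving the identities for real polynomials and passing to uniform limits on $[m,M]$ (respectively $[m^{s},M^{s}]$), exactly as in Proposition \ref{directsum}.
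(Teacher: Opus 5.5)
Your proposal is correct and follows essentially the same route as the paper: $U$ is unitary, $|S_r(T)|=U^*|T|^rU$ and $|S_r(T)^*|=U|T|^rU^*$, unitary invariance of the functional calculus is applied to $\log$, and the chain $U^*\log|T|\,U\ge\log|T|\ge U\log|T|\,U^*$ is multiplied by $r>0$. The differences are only in bookkeeping. You get $|S_r(T)|$ from unitary invariance of $t^{1/2}$ plus the composition rule, whereas the paper uses uniqueness of the positive square root. You also spell out the spectral invariance $\sigma_S(U^*XU)=\sigma_S(X)$ and the identity $\log(X^r)=r\log X$, which the paper leaves implicit.
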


\begin{proof}
Log-hyponormality assumes \(T\) invertible, hence  \(U\) is unitary and \(|T|\) is positive invertible. Therefore \(S_{r}(T)\) is also invertible, so both logarithms in the conclusion are well-defined.
By \eqref{U^*SU1} in Lemma \ref{U^*SU} we have
\begin{eqnarray*}
|S_r(T)|^2 = S_r(T)^* S_r(T)= (U^* |T|^r U^*)(U |T|^r U)= U^* |T|^{2r} U= (U^* |T|^{r} U)^2.
\end{eqnarray*}
Since \(U^{*}|T|^{r}U\) is positive,  uniqueness of the  positive square root yields $\left| S_{r}(T) \right| = U^{*}|T|^{r}U$. Similarly, $|S_r(T)^*| = U |T|^r U^*$. Taking logarithms and using equality \eqref{U^*SU2} in Lemma \ref{U^*SU} for $f(t)=\log t$, we obtain
\begin{equation*}
\log |S_r(T)| = \log(U^* |T|^r U)= U^* (\log |T|^r) U= r \, U^* (\log |T|) U,
\end{equation*}
and
\begin{equation*}
\log |S_r(T)^*| = \log(U |T|^r U^*)= r \, U (\log |T|) U^*.
\end{equation*}
Since $T $ is log-hyponormal, we have $\log |T| \ge \log |T^*|$. But $ |T^*| = U |T| U^* $, by Corollary \ref{T^*} (i), so $\log |T^*| = U (\log |T|) U^*$. Thus
$
\log |T| \ge U (\log |T|) U^*
$.
Multiplying on the left by \( U^* \) and on the right by \( U \) gives $U^* (\log |T|) U \ge \log |T|$. Combining the latter two inequalities yields
\begin{equation*}
U^* (\log |T|) U \ge \log |T| \ge U (\log |T|) U^*.
\end{equation*}
Therefore,
\[
U^* (\log |T|) U \ge U (\log |T|) U^*.
\]
Since \( r > 0 \), multiplying by \( r \) preserves the inequality, so
\[
r \, U^* (\log |T|) U \ge r \, U (\log |T|) U^*.
\]
That is,
\[
\log |S_r(T)| \ge \log |S_r(T)^*|,
\]
which proves that \( S_r(T) \) is log-hyponormal.
\end{proof}
%%%%%%%%%%%%%%%%%%%%%%%%%%%%%%%%%%%%%%%%%%
\section{Conclusion}
We established a quaternionic Hansen--Pedersen inequality for real
operator-convex functions on right quaternionic Hilbert spaces and
obtained its operator-concave counterpart. The proof combines the
continuous real functional calculus for selfadjoint quaternionic
operators with a direct-sum identity and a block-unitary argument. As an
application, we showed that if \(T\) is an injective \(p\)-hyponormal
operator and \(0 < r \leq 1/2\), then \(S_{r}(T) = U|T|^{r}U\) is
\(2p\)-hyponormal for \(0 < p \leq 1/2\) and hyponormal for
\(1/2 < p \leq 1\). We also proved that the transform preserves
log-hyponormality for \(r > 0\). These results suggest that quaternionic
Jensen-type inequalities can provide a useful framework for studying
further transforms associated with polar decomposition.

%%%%%%%%%%%%%%%%%%%%%%%%%%%%%%%%%%%%%%%%%%%%%%%%%%%%%%%%%%%
%\section*{Acknowledgements}

%%%%%%%%%%%%%%%%%%%%%%%%%%%%%%%%%%%%%%%%%%%%%%%%%%

\bibliographystyle{amsplain}

\end{document}